\documentclass[12pt, a4paper,DIV=11]{scrartcl}

\usepackage{amsthm,amsfonts,amsmath,latexsym,mathrsfs,amssymb}
\usepackage{mathtools}

\usepackage[utf8]{inputenc}
\usepackage{xcolor}

\usepackage{libertine}
\usepackage{libertinust1math}
\usepackage[T1]{fontenc}

\usepackage{enumitem}

\usepackage{hyperref}
\hypersetup{colorlinks=true, linkcolor=blue!60!black, citecolor=blue!60!black, urlcolor=blue!60!black}

\newtheorem{theorem}{Theorem}[section]
\newtheorem{lemma}[theorem]{Lemma}
\newtheorem{result}[theorem]{Result}
\newtheorem{proposition}[theorem]{Proposition}
\newtheorem{corollary}[theorem]{Corollary}

\theoremstyle{definition}
\newtheorem{definition}[theorem]{Definition}

\theoremstyle{remark}
\newtheorem{remark}[theorem]{Remark}

\numberwithin{equation}{section}

\newcommand{\bdy}{\partial}

\newcommand{\disk}{\mathbb{D}}
\newcommand{\Cp}{\mathbb{C}_{01}}

\newcommand{\hol}{\mathcal{O}}

\newcommand{\C}{\mathbb{C}}

\newcommand{\Z}{\mathbb{Z}}
\newcommand{\D}{\mathbb{D}}

\usepackage{enumitem}

\setlist[enumerate]{
  itemsep=2pt,
  topsep=2pt,
  parsep=0pt,
  partopsep=0pt,
}

\makeatletter
\def\blfootnote{\gdef\@thefnmark{}\@footnotetext}
\makeatother

\begin{document}

\title{Revisiting Kobayashi hyperbolicity on planar domains}

\author{ Bharathi Thiruvengadam \thanks{Bharathi is supported by a fellowship
from the University Grants Commission, India.\\
  \href{mailto:212114004@smail.iitpkd.ac.in}{212114004@smail.iitpkd.ac.in}, \href{mailto:bharathit.math@gmail.com}{bharathit.math@gmail.com}\\
Department of Mathematics, Indian Institute of Technology Palakkad,
Palakkad, Kerala- 678623, India} \and Jaikrishnan Janardhanan \thanks{Jaikrishnan is supported by a grant from ANRF: \textbf{ANRF/ARGM/2025/000619/MTR} \\
\href{mailto:jaikrishnan@iitpkd.ac.in}{jaikrishnan@iitpkd.ac.in}\\
Department of Mathematics, Indian Institute of Technology Palakkad,
Palakkad, Kerala- 678623, India} }

\date{\today}

\maketitle

\begin{abstract}
We give two new elementary proofs of the complete Kobayashi hyperbolicity of the twice-punctured complex plane. We also present an extremely short proof that bounded domains are complete Kobayashi hyperbolic. Our proofs rely neither on the fact that the universal cover of the twice-punctured plane is the disk nor on the existence of negatively curved metrics. As applications, we present concise proofs of the classical theorems of Landau, Schottky, and Picard. Finally, we provide a characterization of Kobayashi hyperbolicity for planar domains inspired by a similar result of Hahn.
\end{abstract}

\blfootnote{\textup{2020} \textit{Mathematics Subject Classification}:
Primary 32Q45; Secondary 30F45, 32F45}

\blfootnote{\textit{Key words and phrases}:  Complete hyperbolicity, Normal family, Landau's theorem, Kobayashi pseudodistances}

\section{Introduction}

In 1967, Kobayashi \cite{Kobayashi1967} introduced the Kobayashi pseudodistance on complex manifolds which
has become one of the most
influential ideas in geometric function theory. The Kobayashi pseudodistance is
indispensable in the study of several complex variables, complex dynamics, complex differential geometry,
Teichmüller theory, and several other areas of mathematics. Despite its ubiquity
in the study of complex analysis in higher dimensions, the Kobayashi
pseudodistance is not often discussed in the context of planar domains. Indeed,
it rarely appears in standard textbooks on complex analysis; the classic textbook by Krantz \cite{Krantz2004book} and the expository by the same author \cite{Krantz2008kob} seem to be the only accessible sources for a beginner to learn about the Kobayashi pseudodistance on planar domains. The purpose of
this article is to revisit the Kobayashi pseudodistance on planar domains and to
highlight its utility in proving some classical theorems of complex analysis. We
require a couple of quick definitions before we can discuss our main results; see
Section~2 for more details. A complex manifold is said to be
Kobayashi hyperbolic if the Kobayashi
pseudodistance on it is a genuine distance, and the manifold is said to be complete
Kobayashi hyperbolic if the metric space thus obtained is complete.

The main objective of this article is to give two
proofs of the complete Kobayashi hyperbolicity of $\Cp$ that are 
elementary. We now answer the pertinent question of why this is a worthwhile endeavor. 
Our main inspiration comes from a paper by Hahn \cite{Hahn1980} in which he has shown that the
Kobayashi hyperbolicity of planar domains is equivalent to the classical
theorems of Schottky, Landau, and Picard. The proof is quite short and
uses only some basic properties of the Kobayashi pseudodistance. Nevertheless,
there are only a few textbooks on complex analysis that prove the classical
theorems of Picard and Schottky using the Kobayashi pseudodistance. One possible reason for this is
that Hahn's result is elementary \emph{except} for its reliance on
the complete Kobayashi hyperbolicity of the twice-punctured complex plane $\Cp:=\C \setminus \{0,1\}$.
The standard way to see this is to observe that
Kobayashi hyperbolicity and completeness are preserved under covering maps and
that the universal covering space of $\Cp$ is the unit disk $\D$. Although this is
a completely standard fact by the time one is typically introduced to the Kobayashi
distance, it is definitely \textbf{not} a standard fact in a first (or maybe even second?)
course on complex analysis. 

Another way to see the hyperbolicity of $\Cp$ is to
use the fact that we can construct a negatively curved conformal metric on
$\Cp$. From the classical Ahlfors--Schwarz lemma (\cite{Ahlfors1938}), it is easy to prove
that any Riemann surface that admits a Hermitian
metric whose curvature is bounded above by a negative constant is Kobayashi hyperbolic.
It is certainly possible to present the construction of such a metric on
$\Cp$ in an elementary
manner. Nevertheless, there are a few reasons why one might want a direct proof
of the complete hyperbolicity of $\Cp$. First, the construction of such a metric, though elementary, is not
entirely transparent. The famous book by Krantz \cite{Krantz2004book} gives the
simplest such metric:
\[
\mu(z)=\left[\frac{\left(1+|z|^{1 / 3}\right)^{1 / 2}}{|z|^{5 / 6}}\right] \cdot\left[\frac{\left(1+|z-1|^{1 / 3}\right)^{1 / 2}}{|z-1|^{5 / 6}}\right],
\]
and contains a brief discussion on \textit{``where this non-intuitive definition came from''}; see \cite[Theorem~3, Chapter~2]{Krantz2004book} and the discussion after the proof. 
Second, the completeness of the Kobayashi distance on
$\Cp$ is not immediate from this approach, and another argument is still needed to show completeness. In fact, the metric $\mu$ above is \textbf{not} complete. The existence of a complete conformal metric on $\Cp$ was given by Grauert and Reckziegel in \cite{Grauert1965hermitian}. A modern construction, along with a detailed proof, of a complete conformal metric can be found in Chapter~4 of the book \cite{Narasimhan2001} and is very tedious. Our proof 
that $\Cp$ is complete Kobayashi hyperbolic (see Theorem~\ref{T:compzalc} below) is much shorter and more straightforward.  Finally, we believe that the complete Kobayashi hyperbolicity of $\Cp$ is of sufficient
importance that it is worth recording one or two new proofs of this fact. Indeed, it is our
hope that the grand equivalence we establish in the final section is evidence of this claim. 

The main objective of this article is to give two
proofs of the complete Kobayashi hyperbolicity of $\Cp$ that are truly
elementary. This is the content of Section~3 and Section~5; see the
comments there for more about the nature of these proofs. Using complete hyperbolicity, in
Section~4, we give what are perhaps the shortest proofs
of the classical theorems of Landau, Montel, Picard, and Schottky. Finally, in Section~6, we
prove a more general version of the main result in Hahn's paper, proving that the
classical theorems of Landau, Montel, Picard, and Schottky are all equivalent to
Kobayashi hyperbolicity. We emphasize again that, unlike Hahn's proof, our proof does \textbf{not} use the fact that the universal cover of $\Cp$ is the disk.

\subsection*{Notations}
We fix some notation that will be used throughout the article. 
\begin{enumerate}
\item $\D = \{ z \in \C : |z| < 1 \}$ denotes the unit disk in $\C$, and
  $\D^* = \D \setminus \{0\}$ denotes the punctured unit disk in $\C$. 
  
  \item $\D(a,r)$ denotes the Euclidean disk in $\C$ with center $a$ and radius $r>0$.
  \item $\mathbb{H}$ denotes the upper half-plane, defined as $\{ z \in \C : \Im(z) > 0 \}$. 
  
  \item $\Cp$ denotes the twice punctured complex plane $\C \setminus \{0,1\}$.
  \item $\mathcal{C}(M,N)$ denotes the set of all continuous maps between complex manifolds $M$ and $N$, and $\hol(M,N)$ denotes the set of all holomorphic maps from $M$ to $N$.
  
\end{enumerate}

\section{Kobayashi hyperbolicity}
In this section, we briefly present the definitions of the Poincaré distance and the Kobayashi
pseudodistance, along with some important properties
of these distances that are used in this article. As the material in this
section is standard, we will not give precise references for each result. Please refer to
\cite{Abate2023book,SKobayashi1998,jp2013} for more details on the Poincaré
distance and the Kobayashi pseudodistance.

The Poincaré distance is a fundamental concept in complex analysis. It is defined on the unit disk $\D$ and is given by the formula
\[
\omega(z, w) = \tanh^{-1} \left| \frac{z - w}{1 - \bar{w} z} \right| =\frac{1}{2}\log \left( \frac{1+ \left| \dfrac{z - w}{1 - \bar{w} z} \right|}{1-\left| \dfrac{z - w}{1 - \bar{w} z} \right|} \right) \quad \text{for}\ z,w \in \D.
\] 

The \textit{Schwarz-Pick Lemma} can be interpreted as saying that the
holomorphic self maps of $\D$ are distance-decreasing with respect to the
Poincaré distance $\omega$. In fact, automorphisms are isometries with respect
to $\omega$.  The Poincaré distance $\omega$ is a complete distance on $\D$ that induces the Euclidean topology on $\D$. 

We define the Kobayashi pseudodistance on a complex manifold $M$ using the Poincaré distance $\omega$. 

\begin{definition}[Kobayashi pseudodistance] Let $M$ be a complex manifold. Let $p, q \in M$. We choose points $p = p_0,\ldots,p_k = q$ of $M$, points 
  $a_1,\ldots,a_k,b_1,\ldots,b_k$ of the unit disk, and holomorphic mappings
  $f_1,\ldots,f_k$ from $\disk$ into $M$, such that $f_i(a_i) = p_{i-1}$ and
  $f_i(b_i) = p_i$ for $i = 1,\ldots,k$. For each such choice of points and
  mappings, we consider the number
  \[
	\sum_{i=1}^k \omega(a_i,b_i).
  \]
  The \textit{Kobayashi pseudodistance} $d_M(p,q)$ is the infimum of the set of all such
  numbers obtained by varying the choices of points and mappings.
\end{definition}

It is not hard to show that the Kobayashi pseudodistance on a complex manifold
is, indeed, a pseudodistance. The most important property of the Kobayashi
pseudodistance is its contractivity under holomorphic maps.

\begin{result} \label{thm: distance decreasing}
  If $f: M \to N$ is a holomorphic map between complex manifolds $M$ and $N$, then 
    \[
    d_N(f(p), f(q)) \leq d_M(p, q)\quad \text{for all } p, q \in M.
    \] 
    In particular, if $f$ is a biholomorphic map, then 
    \[
    d_N(f(p), f(q)) = d_M(p, q)\quad \text{for all } p, q \in M.
    \]
\end{result} 

The next property is that the Kobayashi pseudodistance is the largest pseudodistance on $M$ that is distance-decreasing under holomorphic maps. Consequently, the Kobayashi pseudodistance coincides with the Poincaré distance on the unit disk $\D$.

\begin{result} \label{pro: largest}
  Let $M$ be a complex manifold and $d$ be a pseudodistance on $M$. Suppose that for any $f \in \hol(\D,M)$, we have 
  \[
  d(f(z),f(w))\leq \omega(z,w)\quad \text{for}\ z,w\in \D.
  \] 
  Then $d\leq d_M$.
\end{result} 

Now, we define the notion of Kobayashi hyperbolicity and completeness. 

\begin{definition}
  A complex manifold $M$ is said to be \textbf{Kobayashi hyperbolic} if the Kobayashi
  pseudodistance $d_M$ is a genuine distance on $M$. Furthermore, $M$ is said to
  be \textbf{complete Kobayashi hyperbolic} if the metric space $(M,d_M)$ is complete.
\end{definition}

\begin{result}
  The topology induced by the Kobayashi pseudodistance on a complex manifold $M$
  coincides with the manifold topology on $M$ whenever $M$ is Kobayashi
  hyperbolic. Furthermore, $M$ is Kobayashi complete hyperbolic if and only if
  every closed ball in $(M,d_M)$ is compact. 
\end{result}

Finally, there is a nice relationship between the Kobayashi pseudodistances whenever we 
have a holomorphic covering map between two complex manifolds. 

\begin{proposition} \label{pro: under covering}
  Let $M$ be a complex manifold, and $(\widetilde{M},\pi)$ be a covering manifold.
  Let $p,q \in M$ and $\widetilde{p}\in \widetilde{M}$ be such that $\pi(\widetilde{p}) = p$. Then
  \[
	d_M(p,q) = \inf_{\widetilde{q} \in
	\pi^{-1}(q)}d_{\widetilde{M}}(\widetilde{p},\widetilde{q}).
  \]
  Furthermore, $M$ is hyperbolic if and only if $\widetilde{M}$ is hyperbolic, and $M$ is complete if and only if $\widetilde{M}$ is complete.
\end{proposition}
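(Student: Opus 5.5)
The plan is to prove the infimum formula first and then obtain the hyperbolicity and completeness statements from it. Throughout, set $\rho(p,q) := \inf_{\widetilde{q} \in \pi^{-1}(q)} d_{\widetilde{M}}(\widetilde{p},\widetilde{q})$.

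\textbf{The inequality $d_M \le \rho$.} Since $\pi$ is holomorphic and $\pi(\widetilde{q}) = q$, Result~\ref{thm: distance decreasing} gives $d_M(p,q) \le d_{\widetilde{M}}(\widetilde{p},\widetilde{q})$ for every lift $\widetilde{q}$ of $q$. Taking the infimum over lifts gives $d_M \le \rho$.

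\textbf{The inequality $\rho \le d_M$.} Take a Kobayashi chain from $p$ to $q$: points $p=p_0,\dots,p_k=q$, disk points $a_i,b_i$, and maps $f_i \in \hol(\D,M)$ with $f_i(a_i)=p_{i-1}$ and $f_i(b_i)=p_i$. Since $\D$ is simply connected, each $f_i$ lifts through the covering $\pi$, with the lift determined by one prescribed value. I would lift the chain inductively.
\begin{enumerate}
\item Let $\widetilde{f}_1$ be the lift of $f_1$ with $\widetilde{f}_1(a_1)=\widetilde{p}$, and set $\widetilde{p}_1 := \widetilde{f}_1(b_1)$, which lies over $p_1$.
\item For $i \ge 2$, let $\widetilde{f}_i$ be the lift of $f_i$ with $\widetilde{f}_i(a_i)=\widetilde{p}_{i-1}$, and set $\widetilde{p}_i := \widetilde{f}_i(b_i)$.
\end{enumerate}
This produces a chain in $\widetilde{M}$ with the same total length $\sum \omega(a_i,b_i)$, running from $\widetilde{p}$ to some $\widetilde{p}_k \in \pi^{-1}(q)$. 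Hence $\rho(p,q) \le \sum \omega(a_i,b_i)$, and taking the infimum over chains gives $\rho \le d_M$.

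\textbf{Hyperbolicity.} If $\widetilde{M}$ is hyperbolic and $p \ne q$, then $\pi^{-1}(q)$ is a discrete closed set not containing $\widetilde{p}$. Because $d_{\widetilde{M}}$ induces the manifold topology (Result~2.4), a small Kobayashi ball around $\widetilde{p}$ lies inside an evenly covered sheet and misses $\pi^{-1}(q)$. The infimum is therefore bounded below by the radius of that ball, so $d_M(p,q)>0$.

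For the converse, suppose $M$ is hyperbolic and $\widetilde{x} \ne \widetilde{y}$ in $\widetilde{M}$. If $\pi(\widetilde{x}) \ne \pi(\widetilde{y})$, then $d_{\widetilde{M}}(\widetilde{x},\widetilde{y}) \ge d_M(\pi\widetilde{x},\pi\widetilde{y}) > 0$. The hard case is $\pi(\widetilde{x}) = \pi(\widetilde{y})$. Here I would use a local argument. Take an evenly covered neighborhood $U$ of $x=\pi(\widetilde{x})$ and a small Kobayashi ball $B_M(x,r)$ whose closure is compact in $U$. Any chain in $\widetilde{M}$ starting at $\widetilde{x}$ with length less than $r$ projects to a chain of length less than $r$ in $M$, so all of its disk images stay in $B_M(x,r)$. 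I would apply this to the images of the disks along the segments $[a_i,b_i]$, using that a disk image has Kobayashi diameter at most the relevant Poincaré lengths. The lifted chain is then a connected path remaining in the sheet over $U$ through $\widetilde{x}$. That sheet does not contain $\widetilde{y}$, so $d_{\widetilde{M}}(\widetilde{x},\widetilde{y}) \ge r$.

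\textbf{Completeness.} Given a $d_M$-Cauchy sequence, I would pass to a subsequence with summable consecutive distances. Using the formula $d_M = \rho$ repeatedly, I would choose successive lifts with nearly the same consecutive distances, which gives a Cauchy sequence upstairs. If $\widetilde{M}$ is complete, this lifted sequence converges, and applying $\pi$ gives convergence downstairs.

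Conversely, if $M$ is complete, closed balls $\overline{B}_M(p,R)$ are compact. A closed ball $\overline{B}_{\widetilde{M}}(\widetilde{p},R)$ maps into $\overline{B}_M(p,R)$. I would show that it is compact by covering $\overline{B}_M(p,R)$ with finitely many small evenly covered balls and using the local sheet argument above to control the lifts. This last step, localizing lifted chains to a single sheet, is the main obstacle in both converse directions.
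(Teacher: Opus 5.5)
The paper does not prove this proposition; it is quoted in Section~2 as standard material from the cited references, so I am judging your argument on its own. Four of your five parts are correct:
\begin{itemize}
\item the infimum formula, obtained by lifting the chain one disk at a time;
\item $\widetilde M$ hyperbolic $\Rightarrow$ $M$ hyperbolic, since the closed fibre $\pi^{-1}(q)$ misses a Kobayashi ball around $\widetilde p$;
\item $M$ hyperbolic $\Rightarrow$ $\widetilde M$ hyperbolic, via your local sheet argument;
\item $\widetilde M$ complete $\Rightarrow$ $M$ complete, via successive near-optimal lifts of a subsequence with summable steps.
\end{itemize}
In the local sheet argument, only the images of the segments $[a_i,b_i]$ stay in $B_M(x,r)$, not the whole disk images, which is how you then use it. The bound $\omega(a_i,c)\le\omega(a_i,b_i)$ for $c$ on the Euclidean segment holds because Poincar\'e discs are Euclidean discs, hence convex.

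The gap is the remaining implication, $M$ complete $\Rightarrow$ $\widetilde M$ complete. You leave it as a plan and yourself call it the main obstacle. Your route, proving directly that $\overline{B}_{\widetilde M}(\widetilde p,R)$ is compact, is harder than necessary. The local sheet argument only controls chains shorter than a radius $r$ that depends on the base point. Compactness of a ball of arbitrary radius $R$ needs that only finitely many sheets over each small ball meet it. That in turn needs a uniform radius on the compact set $\overline{B}_M(p,R)$ and an induction in steps of that radius, and none of this is supplied.

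The fix is to work with Cauchy sequences directly, using only what you already proved.
\begin{enumerate}
\item Let $(\widetilde x_n)$ be $d_{\widetilde M}$-Cauchy. Then $x_n=\pi(\widetilde x_n)$ is $d_M$-Cauchy, so $x_n\to x$ for some $x\in M$.
\item Choose an evenly covered neighbourhood $U$ of $x$ and $r>0$ with $B_M(x,2r)\subset U$. This uses that $d_M$ induces the manifold topology, since $M$ is hyperbolic.
\item Choose $N$ with $d_M(x_n,x)<r$ and $d_{\widetilde M}(\widetilde x_n,\widetilde x_m)<r$ for all $n,m\ge N$.
\item Any chain of length $<r$ from $\widetilde x_N$ to $\widetilde x_m$ projects to a path in $B_M(x_N,r)\subset B_M(x,2r)\subset U$. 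Your local sheet argument then puts every $\widetilde x_m$, $m\ge N$, in the sheet $\widetilde U_0$ through $\widetilde x_N$.
\item Hence $\widetilde x_m=(\pi|_{\widetilde U_0})^{-1}(x_m)\to(\pi|_{\widetilde U_0})^{-1}(x)$ in the manifold topology. This is the $d_{\widetilde M}$-topology, because $\widetilde M$ is hyperbolic by the implication you already established.
\end{enumerate}
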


\begin{corollary}
  The punctured disk $\D^*$ and annuli $\{ z \in \C : r < |z| < 1 \}$ for any $0 < r < 1$ are complete hyperbolic.
\end{corollary}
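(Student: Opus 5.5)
The plan is to apply Proposition~\ref{pro: under covering} to explicit holomorphic covering maps from complete hyperbolic spaces that are biholomorphic to the disk. The disk $\D$ is complete hyperbolic, since $d_\D=\omega$ (a consequence of Result~\ref{pro: largest} together with the Schwarz--Pick lemma), and $\omega$ is complete. By Result~\ref{thm: distance decreasing}, every domain biholomorphic to $\D$ is then also complete hyperbolic. In particular this holds for the upper half-plane $\mathbb{H}$, via the Cayley map $z\mapsto (z-i)/(z+i)$, and for the left half-plane $L=\{w:\Re w<0\}$ together with its translates and rotations.

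For the punctured disk we use the map $\pi(w)=e^{w}$ from $L$ to $\D^*$. It is a holomorphic surjection, since every $z\in\D^*$ has a logarithm with negative real part. It is also a covering map: over a small disk around $z_0\in\D^*$ avoiding $0$, a branch of $\log$ exists, and the preimage is the disjoint union of its $2\pi i\Z$ translates, each mapped biholomorphically. Since $L\cong\D$ is complete hyperbolic, Proposition~\ref{pro: under covering} shows that $\D^*$ is complete hyperbolic.

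For the annulus $A_r=\{r<|z|<1\}$ we take the strip $S=\{w: \log r<\Re w<0\}$ and the map $\pi(w)=e^{w}$ from $S$ to $A_r$. The covering property is verified as before, using local branches of the logarithm. The strip is biholomorphic to $\mathbb{H}$. Rescaling and rotating, $S$ becomes $\{0<\Im\zeta<\pi\}$, and $\zeta\mapsto e^{\zeta}$ maps this biholomorphically onto $\mathbb{H}$ (the inverse is the principal logarithm). Hence $S$ is complete hyperbolic, and another application of Proposition~\ref{pro: under covering} gives the claim for $A_r$.

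No step is a serious obstacle. The only point needing care is checking that the exponential map is genuinely a covering map onto the stated images, that is, that it has evenly covered neighbourhoods and is surjective. This follows from the local existence of logarithm branches and the periodicity of $\exp$. Everything else is direct citation of the stated results.
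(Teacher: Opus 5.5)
Your proposal is correct and follows the route the paper intends: the corollary is stated without proof as an immediate consequence of Proposition~\ref{pro: under covering}, applied to the exponential coverings of $\D^*$ by a half-plane and of the annulus by a strip, both biholomorphic to $\D$ and hence complete hyperbolic because $d_\D=\omega$. Your checks of the covering property and of the biholomorphisms (the Cayley map, and $\zeta\mapsto e^{\zeta}$ on the strip) supply the details the paper leaves implicit.
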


One of the most profound consequences of complete hyperbolicity is the following
general version of Montel's theorem. We need the definition of a normal family.

\begin{definition}[Normal family]
  Let $M$ and $N$ be complex manifolds. A family $\mathcal{F} \subset
  \hol(M,N)$ is said to be \textbf{normal} if every sequence in $\mathcal{F}$
  either has a subsequence that converges in the compact-open topology to a
  holomorphic map from $M$ to $N$ or the entire sequence diverges compactly; that is, for every
  compact subset $K$ of $M$ and every compact subset $L$ of $N$, there exists an
  integer $N_{K,L}$ such that for all $n > N_{K,L}$, $f_n(K) \cap L =
  \emptyset$. Here, $\{f_n\}$ is the sequence under consideration.
\end{definition}

\begin{result}[Montel's theorem]
  Let $N$ be a complete hyperbolic complex manifold and $M$ be a complex
  manifold. Then the family $\hol(M,N)$ is normal.
\end{result}

We now define the Kobayashi--Royden infinitesimal pseudometric. The proofs of results in the rest of this section can be found in \cite{Royden1971}.

\begin{definition}\label{def: Kobayashi infinitesimal pseudometric}
    Let $M$ be a complex manifold, $p \in M$, and $v \in T_pM$. The \textbf{Kobayashi infinitesimal pseudometric at $p$ and tangent to the direction $v$}, denoted by $F_M(p;v)$, is defined as
    \[
    F_M(p;v) = \inf \left\{ r >0 \mid \exists\, f \in \hol(\D,M)\ \text{such that}\ f(0) = p,\ r f'(0)(1) = v \right\}.
    \] 
\end{definition} 

The following properties of the Kobayashi--Royden infinitesimal pseudometric are
relevant to this article.

\begin{result} \label{pro: properties of infinitesimal}
  Let $M$ be a complex manifold. Then the following holds:
  \begin{enumerate}
    \item $F_M(p;cv) = |c| F_M(p;v)$ for all $c \in \C$ and $v \in T_pM$.
    \item $F_M$ is upper semicontinuous on $TM$. 
    \item If $g: M \to N$ is a holomorphic map between complex manifolds $M$ and $N$, then 
    \[
    F_N(g(p); g'(p)(v)) \leq F_M(p; v)\quad \text{for all } p \in M, v \in T_pM.
    \] 
    In particular, if $g$ is a biholomorphic map, then 
    \[
    F_N(g(p); g'(p)(v)) = F_M(p; v)\quad \text{for all } p \in M, v \in T_pM.
    \]
  \end{enumerate}
\end{result}

\begin{remark}
  The Kobayashi--Royden pseudometric for the disk is nothing but the Poincaré
  metric:
  \[
    F_\D(z;v) = \frac{|v|}{1 - |z|^2},\quad \text{for all } z \in \D, v \in T_z\D \cong \C.
  \]
    An easy computation shows that the Kobayashi--Royden pseudometric for the
  punctured disk $\D^*$ is given by
  \[
    F_{\D^*}(z;v) = \frac{|v|}{-2|z|\ln|z|},\quad \text{for all } z \in \D^*, v \in T_z\D^* \cong \C.
  \]
\end{remark}

\begin{definition}
  Let $M$ be a complex manifold. We define the \textit{length} of a $C^1$ smooth curve $\sigma: [0,1] \to M$ as 
\[
\ell_k(\sigma) := \int_0^1 F_M(\sigma(t); \sigma '(t)) dt.
\] 
And define a new pseudodistance on $M$ as \[K_M(z,w)=\inf \ell_k(\sigma)\] where the infimum is taken over all $C^1$ smooth curves $\sigma: [0,1] \to M$ with $\sigma(0) = z$ and $\sigma(1) = w$. 
\end{definition}

It is not difficult to show that $K_M$ is a pseudodistance on $M$, that holomorphic maps
 decrease in distance with respect to $K_M$, and that $K_\D = \omega$. Then we have the following result.

\begin{result}
  $K_M = d_M$
\end{result}

The next result gives us a way to determine the hyperbolicity of a complex
manifold using its infinitesimal pseudometric.

\begin{result} \label{L:hypinfinitesimal}
Let $D$ be a domain in $\C^n$. The domain $D$ is Kobayashi hyperbolic iff for
each $ p \in D $, there exists a neighborhood $ U $ of $ p $ and a positive
constant $ c $ such that $F_D(z; v) > c |v|$ for all $ z \in U $ and $ v \in
T_zD=\C^n $. 
\end{result}

\section{Complete hyperbolicity of $\Cp$ using rescaling}

We now give a direct proof of the complete Kobayashi hyperbolicity of $\Cp$.
The most sophisticated result used in our proof is a version of the Arzela--Ascoli
theorem. We are, therefore, justified in calling this proof truly elementary! Our proof of Kobayashi
hyperbolicity uses a rescaling argument. Similar arguments are used in the proof
of the famous Brody's lemma \cite{Brody1978} as well as in the proof of
Zalcman's formulation of Bloch's principle \cite{Zalcman1975}. Zalcman credits the idea to the
paper of Lohwater and Pommerenke \cite{LohwaterPommerenke1973}. Zalcman's ideas have been vastly
extended in the recent work of Berteloot \cite{Berteloot2025Zalc}.

Our proof is modeled on the crisp 
proof of Brody's lemma in \cite{Duval2021Around} combined with the idea of
using $n$-th roots to contradict Liouville's theorem due to Ros \cite[Theorem~2]{Ros2002}. Ros's $n$-th root trick has been used by Zalcman to give a proof of Montel's theorem \cite{Zalcman1998}. We note that
Montel's theorem immediately gives hyperbolicity by a basic fact of Kiernan \cite{Kiernan1970}. However, Zalcman's proof of Montel's theorem uses both Marty's theorem on normality
as well as Zalcman's own famous rescaling normality lemma. Our direct proof below does \textbf{not} use Marty's theorem and relies only on the basic idea in 
Zalcman's lemma and Ros's $n$-th root trick. Once hyperbolicity
is established, completeness follows from a very simple argument that we found in Kobayashi's
book \cite[Lemma~3.3.5]{SKobayashi1998}. We need the following well known result about normal
families.

\begin{result}[\cite{Wu1967}, Lemma~1.1] \label{R:normalfamily}
Let $\mathcal{F} \subseteq \mathcal{C}(M, N)$, where $M, N$ are connected,
locally compact metric spaces. Then
\begin{enumerate}
  \item If $\mathcal{F}$ is compact, then $\mathcal{F}$ is normal.
   \item If $\mathcal{F}$ is normal, then its closure is locally compact.
   \item If $\mathcal{F}$ is equicontinuous and if each bounded subset of $N$ is relatively compact, then $\mathcal{F}$ is normal.
\end{enumerate}

\end{result}

\begin{theorem}\label{T:compzalc}
  The domain $\Cp$ is complete Kobayashi hyperbolic. 
\end{theorem}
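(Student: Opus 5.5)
The argument has two parts: hyperbolicity via Result~\ref{L:hypinfinitesimal} together with a Zalcman/Brody-type rescaling, and then completeness from a local argument near the punctures, following \cite[Lemma~3.3.5]{SKobayashi1998}.

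\textbf{Hyperbolicity.} Suppose $\Cp$ is not hyperbolic. By Result~\ref{L:hypinfinitesimal} there are a point $p\in\Cp$, points $z_n\to p$, and maps $f_n\in\hol(\D,\Cp)$ with $f_n(0)=z_n$ and $|f_n'(0)|\to\infty$.

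First I pass to roots. Set $g_n=(f_n-a)/(f_n-b)$ for a suitable Möbius normalization, so that $g_n$ omits $0$ and $\infty$ and is therefore a nonvanishing holomorphic function. I then want to arrange that the limit object omits many points. The Ros trick does this: replace $f$ by a $2^k$-th root of $f$ and of $f-1$ in an iterated way. The paper does not specify the construction here, so I will set it up as follows. Any $f\in\hol(\D,\Cp)$ has a holomorphic square root $h$ with $h^2=f$. This $h$ omits $0,1,-1$. Apply the Möbius map $h\mapsto (h+1)/2$ so that the result again omits $0,1$, and it also omits $1/2$. Iterating produces maps omitting the set $E_k$ of $2^k$-th roots of unity together with $0$, up to such normalizations. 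Derivatives at $0$ are only rescaled by factors bounded in terms of $k$ and $p$, so the blow-up $|f_n'(0)|\to\infty$ persists.

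Next comes the rescaling step. Apply the Brody/Duval reparametrization lemma to the spherical derivative $f^\#=|f'|/(1+|f|^2)$: for each $n$, choose a point and scale, via the usual maximum argument on $(1-|z|)f_n^\#(z)$, so that the rescaled maps $F_n(\zeta)=f_n(w_n+\rho_n\zeta)$ are defined on disks of radius $R_n\to\infty$. These maps satisfy $F_n^\#(0)=1$ and $F_n^\#\le 2$ on their domains. This bound is an equicontinuity statement into the Riemann sphere, so Arzelà–Ascoli, in the form of Result~\ref{R:normalfamily}, gives a subsequence converging locally uniformly to a nonconstant entire $F:\C\to\widehat{\C}$ with $F^\#\le 2$.

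By Hurwitz, $F$ omits $0$ and $1$. It may also be identically $\infty$ only in degenerate cases, which are excluded because $F^\#(0)=1$. Hurwitz also shows that $F$ omits $\infty$, after a normalization that keeps one omitted value at $\infty$. Now I run the roots argument on $F$ directly, which is cleaner than doing it on the $f_n$. Since $F$ omits $0,1$ and is entire, it has an entire $2^k$-th root $G_k$ of the normalized function. Each $G_k$ omits all $2^k$-th roots of unity and has spherical derivative bounded by a constant independent of $k$: the spherical derivative of the root is controlled because it is a rescaling-invariant quantity up to a uniform factor. Letting $k\to\infty$ along a subsequence, the $G_k$ converge to a nonconstant entire function omitting a dense subset of the unit circle, hence the whole circle. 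Its image therefore lies in $\D$ or outside $\overline{\D}$, and Liouville forces it to be constant. This is a contradiction.

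The main obstacle is the bookkeeping in the roots step. I must ensure that the $k$-th root maps remain normalized, with spherical derivative bounded above uniformly in $k$ and bounded below at $0$, so that the limit is nonconstant. If this fails, the fallback is to perform the root extraction before rescaling, on the $f_n$ themselves.

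\textbf{Completeness.} Every closed Kobayashi ball must be compact. Near a puncture, say $0$, the disk $\D(0,1/2)\setminus\{0\}\subset\Cp$ is itself a punctured disk. So $\Cp$ is a hyperbolic domain that is locally complete at each boundary point of the sphere, $0$, $1$ and $\infty$, each of which has a punctured-disk neighborhood. Following Kobayashi's lemma, if $d_{\Cp}(p,q_n)\le R$ with $q_n\to 0$, the nearly optimal chains must pass from $\{|z|=1/4\}$ to $q_n$. Uniform hyperbolicity on the compact annulus $\{1/4\le|z|\le1/2\}$, via Result~\ref{L:hypinfinitesimal}, together with the comparison $F_{\Cp}\ge F_{\D(0,1/2)^*}$ on the smaller circle region, then shows that the distance tends to infinity. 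The complete metric on $\D^*$ from the Corollary gives this directly. The same argument applies at $1$ and at $\infty$, the latter using $z\mapsto 1/z$. Closed balls are therefore closed and bounded away from the punctures, hence compact.
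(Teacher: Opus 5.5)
Your rescaling step is sound, and normalizing with the spherical derivative is the right choice: $F_n^{\#}(0)=1$ forces a nonconstant limit $F$. By Hurwitz, $F$ is an entire function omitting $0$ and $1$ with $F^{\#}\le 2$. The argument breaks at the next step. For a single fixed $F$ omitting $0$, the roots $G_k=F^{1/2^k}$ satisfy $|G_k|=|F|^{2^{-k}}\to 1$ and $G_k'=2^{-k}G_kF'/F\to 0$ locally uniformly on $\C$. So every subsequential limit of $(G_k)$ is a unimodular constant, and Hurwitz gives nothing. In particular $G_k^{\#}(0)\le 2^{-k-1}|F'(0)/F(0)|\to 0$, so the lower normalization you flagged as the main obstacle really does fail. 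The spherical derivative is not preserved under roots: if $G^m=F$ and $t=|G|$, then $G^{\#}=\frac{1}{m}F^{\#}\,\frac{t^{1-m}+t^{1+m}}{1+t^2}$.

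The missing idea is to renormalize \emph{after} extracting roots, and there are two ways to do it. First, you can rescale each nonconstant entire $G_k$ once more: maximize $(R-|z|)G_k^{\#}(z)$ on $\D(0,R)$ and let $R\to\infty$. This gives entire $H_k$ omitting $0$ and the $2^k$-th roots of unity with $H_k^{\#}\le 2=2H_k^{\#}(0)$, and only then do you let $k\to\infty$. Second, you can use your fallback, which is the order followed in the paper. Replace $f_n$ by a branch of $f_n^{1/m_n}$ with $m_n\to\infty$ slowly enough that the blow-up survives, then rescale once. The paper takes $|f_n'(0)|>n^2$ and $m_n=n$; in your setting $|(f_n^{1/m_n})'(0)|\approx|f_n'(0)|/(m_n|p|)$. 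Either way the final limit is nonconstant and omits $0$, $\infty$ and the whole unit circle. It therefore maps $\C$ into $\D$ or into $\C\setminus\overline{\D}$, and Liouville applied to it or to its reciprocal gives the contradiction.

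Two smaller points. Your iteration with square roots and $h\mapsto(h+1)/2$ does not produce the $2^k$-th roots of unity as omitted values, whereas $f^{1/m}$ directly omits $0$ and all $m$-th roots of unity. Also keep the spherical normalization. With the Euclidean quantity $\delta(z)|f_n'(z)|$ used in the paper, a finite limit is forced to be affine, and the case one must then rule out separately is the constant limit $\infty$.

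The completeness part has a gap as well. The comparison $F_{\Cp}\ge F_{\D(0,1/2)^*}$ is backwards: since $\D(0,1/2)^*\subset\Cp$, contractivity under the inclusion gives $F_{\Cp}\le F_{\D(0,1/2)^*}$. So completeness of the punctured disk says nothing about $\Cp$ by itself. The annulus bound only shows that crossing $\{1/4\le|z|\le 1/2\}$ costs a fixed positive amount, which does not make $d_{\Cp}(p,q_n)$ unbounded. What you need is a localization estimate in the opposite direction; supplying it is the real content of the paper's completeness argument.

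Use the annulus bound to find $\delta>0$ with $d_{\Cp}(z,w)>\delta$ whenever $z\in\Cp\setminus\D^*$ and $w\in\D^*/2$. Then every $f\in\hol(\D,\Cp)$ with $f(0)\in\D^*/2$ maps $\D(0,\tanh\delta)$ into $\D^*$, because $d_{\Cp}(f(0),f(\zeta))\le\omega(0,\zeta)<\delta$ there. The paper pushes this through chains of discs to get $d_{\Cp}\ge c\,d_{\D^*}$ for $d_{\Cp}$-close points of $\D^*/2$. Infinitesimally, precomposing with $\zeta\mapsto\tanh(\delta)\zeta$ gives at once $F_{\Cp}(z;v)\ge\tanh(\delta)\,F_{\D^*}(z;v)$ for $z\in\D^*/2$. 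Integrating along curves (via $K_{\Cp}=d_{\Cp}$) and using the completeness of $\D^*$ then yields $d_{\Cp}(p,q_n)\to\infty$ as $q_n\to 0$. The punctures $1$ and $\infty$ are handled by the automorphisms $1-z$ and $1/z$, as you say.
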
 

\begin{proof} 
    We will show that we find a constant $c > 0$ such that for each $z \in \D^*$, we have 
    \[
      F_{\Cp}(z;1) > c.
    \]
    If the above is not true, then it is clear that we can find a sequence $z_n
    \in \D^*$ and holomorphic maps $f_n:\D \to \Cp$ such that
    $f_n(0) = z_n$ and $|f_n'(0)| > n^2$. We replace each $f_n$ by some branch of
    $f_n^{1/n}$ so that we have $|f_n'(0)| > n$ and each $f_n$
    misses the $n$-th roots of unity.     
    
    We may assume that $f_n$ is smooth up to the boundary by slightly shrinking
    the disk and scaling. Consider the function
    \[
      g_n(z) = \delta(z)|f_n'(z)|,
    \]
    where $\delta(z) := \textsf{dist}(z,\partial\D)$ is the distance from $z$ to
    $\bdy \D$. This is a continuous function on $\overline{\D}$ that vanishes on
    the boundary. Let $a_n \in \disk$ be a point in $\D$ such that the maximum of
    $g_n$ is attained at $a_n$. Let $D_n := \D(a_n, \delta(a_n)/2)$. For $z \in
    D_n$, we have $\delta(z) \geq \delta(a_n)/2$ and hence
    \[
      |f_n'(z)| \leq \frac{g_n(z)}{\delta(z)} \leq \frac{g_n(a_n)}{\delta(a_n)/2} = 2|f_n'(a_n)|.
    \]

    Now, define the entire maps 
    \[
      r_n(z) = a_n + \frac{z}{f_n'(a_n)},
    \]
    and let $D_n'$ be the preimage of $D_n$ under $r_n$. As
    $\delta(a_n)|f_n'(a_n|) \geq g_n(0) = |f_n'(0)| > n$, it follows that $D_n'$
    become larger and larger disks as $n \to \infty$. Furthermore, we have 
    $|(f_n \circ r_n)'(z)| \leq 2, z \in D_n'$ and $(f_n \circ r_n)'(0) = 1$. It
    follows from a straightforward argument using Result~\ref{R:normalfamily} that we can
    find a subsequence of $f_n \circ r_n$ that
    converges uniformly on compact subsets of $\C$ to a meromorphic map
    $F:\C \to \hat{\C}$. Notice that by our construction, each $f_n
    \circ r_n$ misses the $n$-th roots of unity. Therefore, $F$ misses the entire
    unit circle which is impossible. Thus, we conclude that
    there is a positive constant $c$ such that
    \[
      F_{\Cp}(z;1) > c, \quad \text{for all } z \in \D^*. 
    \]
    It is now clear that the hypothesis of Result~\ref{L:hypinfinitesimal} is
    satisfied whenever $z \in \D^*$. Result~\ref{pro: properties of infinitesimal} says $F_{\Cp}$ is invariant under the automorphism 
    $1/z$ of $\Cp$ and that $F_{\Cp}$ is upper semi-continuous. 
    We see that the hypothesis of Result~\ref{L:hypinfinitesimal} is satisfied
    at each point of $\Cp$. Therefore, $\Cp$ is hyperbolic.

    To see that the Kobayashi distance on $\Cp$ is complete, let $z_n$ be a
    Cauchy sequence in $\Cp$ with respect to the Kobayashi distance. It suffices to show that a
    subsequence of $z_n$ converges to a point in $\Cp$. By compactness of the Riemann sphere and by
    passing to to a subsequence, if necessary, we may assume that $z_n$ converges to a point in the Riemann
    sphere in the Euclidean topology.
    
    If the sequence $z_n$ were to converge in the Euclidean metric to a point $z_0 \in \Cp$, then
    we are done as the topology induced by the Kobayashi distance and the
    Euclidean distance coincide. As the automorphism group of $\Cp$ are
    Möbius transformations that permute $0,1$ and $\infty$, we may assume, by
    passing to a subsequence, that $z_n \in \D^*/2 := \D(0,1/2) \setminus \{0\}$. 

     Let $\delta > 0$ be chosen such that
    \[
      d_{\Cp}(z,w) > \delta \quad \text{for all } z \in \Cp \setminus \D^*, w \in \D^*/2.
    \]
    To see that such a $\delta$ exists, we first note that hyperbolicity guarantees that the 
    Kobayashi--Royden metric is continuous.  Consider the closed annulus $A$ of 
    inner-radius $2/3$ and outer-radius $3/4$. The Kobayashi--Royden metric must attain a 
    positive minimum on $A$. It follows that any curve joining the boundary circles of $A$ must
    have length bounded below by some positive constant. 
    Since any curve joining $z \in \Cp \setminus \D^*$ to $w \in \D^*/2$ must pass through this compact annulus, 
    the existence of $\delta$ is clear. 
    We may, of course, further assume that 
    $d_{\Cp}(z_n,z_m) < \delta' := \delta/2$ for all $n,m$. Let $p,q \in \D^*/2$ with
    $d_{\Cp}(p,q) < \delta'$ and let $f_i:\disk \to \Cp, i=1,\ldots,k$ be a chain
    of holomorphic maps that connect $p$ to $q$ as in the definition
    of the Kobayashi pseudodistance. More precisely, we have points $p =
    p_0,\ldots,p_k = q$ of $\Cp$, points $a_1,\ldots,a_k \in \D$ such that $f_i(0)
    = p_{i-1}$ and $f_i(a_i) = p_i$ for $i=1,\ldots,k$, and
    \[
      \sum_{i=1}^k \omega(0,a_i) < \delta'.
    \]
    It is now clear that the intermediate points $p_i$ are
    at most $\delta'$ distance away from $p$ and from each other. Therefore, each $a_i
    \in \D(0, \tanh(\delta'))$ and the image of this disk under each $f_i$ lies in $\D^*$. Define $D
    := \D(0,\tanh(\delta))$ and $D':= \D(0, \tanh(\delta'))$. Then $D' \Subset D$
    and we can find $c > 0$ such that
    \[
      \omega(0,z) \geq cd_D(0,z) \quad \text{for all } z \in D'.  
    \]
    We now have
    \[
      \sum_{i=1}^k \omega(0,a_i) \geq c \sum_{i=1}^k d_D(0,a_i) \geq c \sum_{i=1}^k d_{\D^*}(f_i(0), f_i(a_i)) \geq c d_{\D^*}(p,q).
    \]
    This is valid for any chain of holomorphic maps connecting $p$ and $q$
    whose net length is less than $\delta'$. Therefore, we have 
    \[
      d_{\Cp}(p,q) \geq c d_{\D^*}(p,q).
    \]
    It follows that $z_n$ is a Cauchy sequence in $\D^*$ with respect to the
    Kobayashi distance. But the Kobayashi distance on $\D^*$ is complete, so $z_n$
    must converge to a point in $\D^*$. Thus, we conclude that the Kobayashi distance on $\Cp$ is complete.

\end{proof}
\begin{corollary}
  Any hyperbolic planar domain $D$ is complete hyperbolic.
\end{corollary}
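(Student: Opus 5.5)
The plan is to reduce everything to the two model cases already handled: the punctured disk $\D^*$, which is complete hyperbolic by the Corollary following Proposition~\ref{pro: under covering}, and $\Cp$, which is complete hyperbolic by Theorem~\ref{T:compzalc}.

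\textbf{Step 1: the complement has at least two points.} Let $D \subsetneq \C$ be a hyperbolic planar domain. Its complement $\C\setminus D$ must contain at least two points. Indeed, $d_\C \equiv 0$, since constant-speed maps $z\mapsto a+Rz(b-a)$ show $d_\C(a,b) \le \omega(0,1/R)\to 0$. The same kind of exponential or linear maps show $d_{\C\setminus\{a\}}\equiv 0$. After an affine change of coordinates (an isometry by Result~\ref{thm: distance decreasing}) we may therefore assume $0,1 \notin D$, so $D \subseteq \Cp$.

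\textbf{Step 2: localize a Cauchy sequence.} Let $z_n$ be $d_D$-Cauchy. Since $d_D \ge d_{\Cp}$ by contractivity of the inclusion, $z_n$ is $d_{\Cp}$-Cauchy. By Theorem~\ref{T:compzalc} it converges to some $z_0 \in \Cp$. If $z_0 \in D$ we are done, because the Kobayashi topology of the hyperbolic domain $D$ agrees with the Euclidean topology. It remains to exclude $z_0 \in \bdy D \setminus\{0,1\}$, or more generally any $z_0 \in \bdy D$.

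\textbf{Step 3: repeat the completeness argument near $z_0$.} Choose a second boundary point $w_0 \in \C\setminus D$ with $w_0\ne z_0$; one exists by Step 1. By an affine map, send $z_0 \mapsto 0$ and $w_0 \mapsto 1$, so $D\subseteq\Cp$ and $z_n \to 0$ in the Euclidean sense. Now repeat verbatim the completeness part of the proof of Theorem~\ref{T:compzalc}, with $D$ in place of $\Cp$:
\begin{enumerate}
\item Find $\delta>0$ with $d_D(z,w) > \delta$ whenever $z \in D\setminus \D^*$ and $w\in D\cap \D^*/2$. This follows because $d_D \ge d_{\Cp}$ and the corresponding $\delta$ for $\Cp$ works.
\item For $p,q$ with $d_D(p,q)<\delta/2$, every short chain of discs has images inside $D\cap\D^*$. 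The comparison $\omega \ge c\, d_{\D(0,\tanh\delta)}$ on the smaller disk then gives $d_D(p,q) \ge c\, d_{D\cap \D^*}(p,q) \ge c\, d_{\D^*}(p,q)$.
\end{enumerate}
Hence $z_n$ is Cauchy for $d_{\D^*}$, so it converges in $\D^*$, contradicting $z_n\to 0$. This shows the limit lies in $D$.

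\textbf{Main obstacle.} The delicate point is the chain-localization in Step 3, specifically ensuring that the images $f_i(\D(0,\tanh\delta'))$ stay inside $\D^*$. This uses the lower bound $d_D \ge d_{\Cp}$ together with the separation constant $\delta$ coming from $\Cp$. Beyond that point the argument is identical to the one already given. As a cleaner alternative for Step 3, we could note directly that $d_D(z_n,\cdot) \ge d_{\D^*}$-type estimates blow up near the puncture, because the $\D^*$ distance to $0$ is infinite.
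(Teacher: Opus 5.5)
Your proof is correct, and Steps~1 and~2 follow the paper's reduction: assume $D\subseteq\Cp$, use Theorem~\ref{T:compzalc} to get a limit $z_0\in\Cp$ of the $d_D$-Cauchy sequence, and, if $z_0\in\bdy D$, work in the twice-punctured plane $\C\setminus\{z_0,w_0\}$, which contains $D$.

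You diverge only in the closing step. The paper notes that $z_n$ is Cauchy for the Kobayashi distance of $G=\C\setminus\{z_0,1\}$, an affine copy of $\Cp$. By Theorem~\ref{T:compzalc} it must then converge to a point of $G$, which is impossible because its Euclidean limit is $z_0\notin G$. You instead re-run the chain-localization from the completeness part of Theorem~\ref{T:compzalc} with $D$ in place of $\Cp$. This gives $d_D\ge c\,d_{\D^*}$ near the puncture, and you then invoke completeness of $\D^*$.

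That route is valid, but it re-proves a special case of a theorem you already cite in Step~2. After your affine change you have $D\subseteq\Cp$ and $z_n\to 0$, so one more application of Theorem~\ref{T:compzalc} finishes in a line. The ``main obstacle'' you flag is therefore not actually needed.

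The detour also imports that argument's constant bookkeeping. The disk $\D(0,\rho)$ on which you compare $\omega(0,\cdot)$ with $d_{\D(0,\rho)}(0,\cdot)$ must itself be mapped into $\D^*$ by each $f_i$. With chains of length $<\delta/2$ and $\rho=\tanh\delta$, points of that disk can land at distance up to $3\delta/2$ from $p$, so they are not guaranteed to lie in $\D^*$. One should instead take chains of total length $<\delta/3$ and compare on $\D(0,\tanh(2\delta/3))$. Citing the theorem directly avoids this issue entirely.
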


\begin{proof}
  Without loss of generality, we may assume that $D \subset \Cp$ as $\C$ and
  $\C\setminus \{z_0\}$ for any $z_0 \in \C$ are not hyperbolic. By the
  distance-decreasing property of the Kobayashi distance, we have
  \begin{itemize}
    \item The domain $D$ is Kobayashi hyperbolic as $\Cp$ is Kobayashi hyperbolic.
    \item Any Cauchy sequence $z_n$ in $D$ with respect to the Kobayashi distance is also a Cauchy sequence in $\Cp$ with respect to the Kobayashi distance. Since the Kobayashi distance on $\Cp$ is complete, it follows that any Cauchy sequence in $D$ converges to a point in $\Cp$.
  \end{itemize}
  Let $z_n$ be a Cauchy sequence in $D$ with respect to the Kobayashi distance. It suffices to show that some subsequence of $z_n$ converges to a point in $D$. By compactness, there is a subsequence $z_{n_k}$ that converges to a point $z\in \overline{D}$ in the Euclidean topology. Here the closure is taken in the Riemann sphere. 
   If $z\in D$, we are done. 
  Let us assume that $z \in \partial D$. Now consider the new domain $G=\C \setminus \{z,1\}$. 
  It is obvious that $D$ is an open subset of $G$ and that the sequence $(z_n)$ is also a Cauchy sequence in $(G, d_G)$. 
  Since the Kobayashi distance on $G$ is complete, it follows that $(z_n)$ converges to a point in $(G, d_G)$. But this contradicts the uniqueness of the limit points of the convergent subsequence $(z_{n_k})$. 
  Therefore, the Kobayashi distance on $D$ is complete.
\end{proof}

The same idea gives a very simple proof of the completeness of the Kobayashi
distance on bounded planar domains that does not seem to have appeared in the literature. 
\begin{theorem}
\label{thm: completeness of bounded domains}
Any bounded planar domain is complete hyperbolic.
\end{theorem}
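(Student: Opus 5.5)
The plan is to reuse the boundary-point argument from the preceding corollary, with the punctured disk $\D^*$ (rather than $\Cp$) as the comparison domain. Let $D$ be a bounded planar domain, say $D \subset \D(0,R)$. First, $D$ is Kobayashi hyperbolic: the inclusion $D \hookrightarrow \D(0,R)$ is holomorphic, so by Result~\ref{thm: distance decreasing} we have $d_D \geq d_{\D(0,R)}$. The latter is a rescaled Poincaré distance, hence a genuine distance. Alternatively, hyperbolicity follows from $D \subset \Cp$ after an affine change of coordinates.

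For completeness, take a $d_D$-Cauchy sequence $(z_n)$ in $D$. It suffices to find a subsequence converging in $D$, because the Kobayashi topology agrees with the Euclidean topology on a hyperbolic domain, and a Cauchy sequence with a convergent subsequence converges. Since $D$ is bounded, some subsequence $z_{n_k}$ converges in the Euclidean sense to a point $z_0 \in \overline{D}$, and the limit is finite, with no point at infinity to worry about. If $z_0 \in D$ we are done, so suppose $z_0 \in \partial D$. In that case $z_0 \notin D$, so $D \subset G := \D(z_0, 2R') \setminus \{z_0\}$ for $R'$ large enough, for instance $R' = R + |z_0|$. The domain $G$ is an affine image of $\D^*$, so by the Corollary following Proposition~\ref{pro: under covering} (together with Result~\ref{thm: distance decreasing} for the affine biholomorphism) it is complete hyperbolic.

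By the distance-decreasing property applied to the inclusion $D \hookrightarrow G$, we get $d_G \leq d_D$, so $(z_n)$ is Cauchy in $(G, d_G)$. Completeness of $G$ then gives convergence of $(z_n)$ in the $d_G$ topology, which is the Euclidean topology, to some point $w \in G$. This contradicts $z_{n_k} \to z_0 \notin G$ in the Euclidean sense, since Euclidean limits are unique. Hence $z_0 \in D$, and $D$ is complete hyperbolic.

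There is no real obstacle. The only point needing care is that the bounded domain removes the point at infinity, which is exactly why a single puncture suffices here and we can use the elementary completeness of $\D^*$ rather than that of $\Cp$.
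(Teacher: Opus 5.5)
Your proof is correct and is essentially the paper's argument. You pass to a Euclidean-convergent subsequence, and if its limit $z_0$ lies on $\partial D$, you embed $D$ in a once-punctured disk $G$ with puncture at $z_0$; completeness of $G$ (via $\D^*$) together with $d_G \le d_D$ then forces convergence inside $G$, a contradiction. The only cosmetic difference is that you center the disk at $z_0$, making $G$ an affine image of $\D^*$, whereas the paper takes a large disk containing $\overline{D}$ punctured at $z_0$ and so implicitly uses a disk automorphism to identify $G$ with $\D^*$.
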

\begin{proof}
  Let $D$ be a bounded domain in $\C$. It is clear that $D$ is a Kobayashi hyperbolic domain.
  Let $z_n$ be a Cauchy sequence in $D$ with respect to the Kobayashi distance. It suffices to show that some subsequence of $z_n$ converges to a point in $D$. By compactness, there is a subsequence $z_{n_k}$ that converges to a point $z\in \overline{D}$ in the Euclidean topology. 
  If $z \in D$, then we are done. Let us assume that $z \in \partial D$. Let
  $D'$ be large disk that contains the closure of $D$. Consider the new domain
  $G = D' \setminus \{z\}$. Since $D$ is an open subset of $G$, it is clear that
  $z_{n_k}$ is also a Cauchy sequence in $(G, d_G)$. However, since the Kobayashi distance
  on $G$ is complete, it follows that $z_{n_k}$ converges to a point in $(G,
  d_G)$, which is not possible. Therefore, the Kobayashi distance on $D$ is complete.
\end{proof}

\section{The classical theorems via complete hyperbolicity}

We now present what are perhaps the shortest proofs of Landau's theorem, Schottky's theorem, and
Picard's great theorem using the completeness of the Kobayashi distance on
hyperbolic planar domains. These proofs are certainly not new or original, but
they are quite elegant and extremely short, so we could not resist including them here as we need the statements in any case to formulate our main result in Section~6.  We
begin with Landau's theorem and note that the proof does not use the completeness of the Kobayashi distance.

\begin{theorem}[Landau's theorem]\label{T:landau}
  Let $a \in \Cp$ and let $0 \neq b \in \C$. There exists a universal constant
  $R(a,b) > 0$ such that if $f:\disk(0,R(a,b)) \to \C$ is holomorphic, with
  $f(0) = a$ and $f'(0) = b$, then $f(\disk(0,R(a,b)))$ contains either $0$ or $1$. 
\end{theorem}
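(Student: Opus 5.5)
Our plan is to use the infinitesimal form of Kobayashi hyperbolicity of $\Cp$ together with the contraction property of the Kobayashi--Royden pseudometric under holomorphic maps (Result~\ref{pro: properties of infinitesimal}). By Theorem~\ref{T:compzalc}, $\Cp$ is Kobayashi hyperbolic, so for the fixed point $a\in\Cp$ the number $F_{\Cp}(a;1)$ is strictly positive. Positivity at a single point follows from Result~\ref{L:hypinfinitesimal}, which gives a neighborhood $U$ of $a$ and $c>0$ with $F_{\Cp}(z;v)>c|v|$ on $U$. Alternatively, the first part of the proof of Theorem~\ref{T:compzalc} gives the bound directly for $a\in\D^*$, and it transfers to all of $\Cp$ via the automorphisms.

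We then define
\[
R(a,b) := \frac{1}{|b|\,F_{\Cp}(a;1)},
\]
which is finite and positive. Suppose $f:\D(0,R)\to\C$ is holomorphic with $f(0)=a$ and $f'(0)=b$, where $R=R(a,b)$, and suppose $f$ omits both $0$ and $1$. Then $f\in\hol(\D(0,R),\Cp)$. Consider $g(\zeta):=f(R\zeta)$, a holomorphic map $\D\to\Cp$ with $g(0)=a$ and $g'(0)=Rb$. Writing $Rb=\frac{1}{r}\cdot 1\cdot\ldots$ in the form required by Definition~\ref{def: Kobayashi infinitesimal pseudometric}, the map $g$ is admissible for the vector $v=1$ at $a$ with $r=1/(R|b|)$, after composing with a rotation of $\D$ to make $g'(0)$ a positive multiple of $1$. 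Hence
\[
F_{\Cp}(a;1)\le \frac{1}{R|b|}=F_{\Cp}(a;1).
\]

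Equality alone is not a contradiction, so we need a little room. We therefore set $R(a,b):=2/(|b|F_{\Cp}(a;1))$ instead. Then the same argument gives $F_{\Cp}(a;1)\le \tfrac12 F_{\Cp}(a;1)$, which contradicts $F_{\Cp}(a;1)>0$. Equivalently, one can apply Result~\ref{pro: properties of infinitesimal}(3) to $f:\D(0,R)\to\Cp$, using $F_{\D(0,R)}(0;1)=1/R$, which is a scaling of the disk formula in the remark. Either way, $f$ must take the value $0$ or $1$.

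The only real content is the positivity $F_{\Cp}(a;1)>0$, which is exactly the hyperbolicity established in Theorem~\ref{T:compzalc}. The main point of care is the normalization: the disk $\D(0,R)$ must be rescaled to $\D$ and rotated so that the derivative matches the definition of $F$, and the constant must be chosen with a factor of slack so that the inequality is strict. Completeness is not used, consistent with the remark preceding the theorem.
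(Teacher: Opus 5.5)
Your proof is correct and is essentially the paper's argument. By homogeneity, your constant $2/(|b|F_{\Cp}(a;1))$ is exactly the paper's $R(a,b)=2/F_{\Cp}(a;b)$, and the paper likewise obtains the contradiction $F_{\Cp}(a;b)\le 1/R(a,b)=F_{\Cp}(a;b)/2$ from contractivity of the Kobayashi--Royden metric, using only positivity of $F_{\Cp}$ (hyperbolicity) and not completeness.
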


\begin{proof}
  Set $R(a,b) = \dfrac{2}{F_{\Cp}(a;b)}$ and Let $f:\D(0,R(a,b)) \to \C$ be holomorphic with $f(0) =
  a$ and $f'(0) = b$. Suppose the image of $f$ does not contain either $0$ or
  $1$. Then $f$ is a holomorphic map from $\D(0,R(a,b))$ to $\Cp$. 
  Then, by the contractivity of the Kobayashi--Royden metric, we have
  \[
    F_{\Cp}(a;b) \leq \frac{1}{R(a,b)} = \frac{F_{\Cp}(a;b)}{2},
  \]
  which is a contradiction. Thus, the image of $f$ must contain either $0$ or
  $1$. 
\end{proof}

Now we come to Montel's theorem. If $D \subset \C$ is complete Kobayashi hyperbolic,
then the domain $D$, equipped with the Kobayashi distance, satisfies all the hypotheses
of Result~\ref{R:normalfamily}. Therefore, the family $\hol(\D,D)$ is normal.

Let $D$ be a hyperbolic planar domain and $a\in D$. For $R>0$, define a family $\mathcal{F}(a,R)$ by 
\[
  \mathcal{F}(a,R)=\left\{f \in \hol(\D, D) : d_D(a, f(0))<R \right\}.
\] 
Equivalently, $\mathcal{F}(a,R)$ is the family of holomorphic functions $f:\D
\to D$ such that $f(0) \in B_D(a,R)$, where $B_D(a,R)$ is the Kobayashi ball
centered at $a$ with radius $R$ in the domain $D$. We will now prove a version
of Schottky's theorem for the family $\mathcal{F}(a,R)$. Our proof is modeled on
the one in \cite{Hahn1980}

\begin{theorem}[Schottky's theorem]
   For any $0<r<1$, there is a constant $M(r,R)>0$ such that if $f\in \mathcal{F}(a,R)$, then 
  \[
  |f(z)|\leq M(r,R) \quad \text{for all}\  \ |z| \leq r.
  \] 
\end{theorem}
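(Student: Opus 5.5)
We use the distance-decreasing property (Result~\ref{thm: distance decreasing}) together with the fact that closed Kobayashi balls in a complete hyperbolic domain are compact. Here $D$ is a hyperbolic planar domain, so by the preceding corollary it is complete hyperbolic. Let $f \in \mathcal{F}(a,R)$ and $|z| \le r$. Since $f:\D \to D$ is holomorphic, Result~\ref{thm: distance decreasing} gives
\[
d_D(f(0), f(z)) \le \omega(0,z) \le \omega(0,r) = \tanh^{-1}(r).
\]
By the triangle inequality,
\[
d_D(a, f(z)) \le d_D(a,f(0)) + d_D(f(0),f(z)) < R + \tanh^{-1}(r).
\]
Thus $f(\overline{\D(0,r)})$ is contained in the closed Kobayashi ball $\overline{B}_D\bigl(a, R+\tanh^{-1}(r)\bigr)$, for every $f\in\mathcal{F}(a,R)$.

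Because $D$ is complete hyperbolic, the result stated in Section~2 shows that this closed ball is compact in $(D,d_D)$. Since $d_D$ induces the Euclidean topology on $D$, the ball is also a compact subset of $D\subset\C$ in the usual topology. A continuous function attains its maximum on a compact set, so we may set
\[
M(r,R) := \max\Bigl\{ |w| : w \in \overline{B}_D\bigl(a,R+\tanh^{-1}(r)\bigr)\Bigr\}.
\]
This quantity is finite and positive; if it happened to be $0$, replace it by $1$. Then $|f(z)|\le M(r,R)$ for all $|z|\le r$ and all $f\in\mathcal{F}(a,R)$, which proves the theorem. The constant depends on $a$ and $D$ as well, but these are fixed.

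The main point to check is that closed Kobayashi balls are compact in the Euclidean topology. This is exactly where completeness is used, through the Hopf--Rinow-type statement recorded in Section~2. Without completeness, the ball could accumulate at a boundary point of $D$, possibly $\infty$, and no bound would follow. Everything else is a direct chain of inequalities.
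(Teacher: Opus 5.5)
Your proposal is correct and follows essentially the same argument as the paper: the distance-decreasing property gives $d_D(f(0),f(z))\le \tanh^{-1}(r)$, the triangle inequality places $f(\overline{\D(0,r)})$ in a Kobayashi ball of radius $R+\tanh^{-1}(r)$ about $a$, and completeness makes that ball compact in $D$. The only difference is that you use the closed ball and the paper uses the relatively compact open ball, which changes nothing.
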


\begin{proof} Set $\delta=\tanh^{-1}(r)$.
 Consider the Kobayashi ball $B_D(a,R+\delta)$ centered at $a$ with radius $R+\delta$ in the domain $D$. Note that for each $f\in \mathcal{F}(a,R)$ and for all $z$ with $|z| \leq r$, we have
 \[ 
 d_D(a,f(z)) \leq d_D(a,f(0)) + d_D(f(0), f(z)) < R + \omega(0,z) < R + \delta,
 \]
  this shows that $f(z) \in B_D(a,R+\delta)$ for all $|z| \leq r$. Since the Kobayashi distance on $D$ is complete, the ball $B_D(a,R+\delta)$ is relatively compact in $D$. Therefore, there is a constant $M(r,R)>0$ such that $|f(z)| \leq M(r,R)$ for all $|z| \leq r$. This completes the proof.
\end{proof}

The proof of Picard's great theorem below recasts the
one given in \cite{Lvovski2020} in terms of the completeness of the Kobayashi
distance. We believe that Lvovski's proof is inspired by
the ideas from Kwack's famous generalization of Picard's theorem
\cite{Kwack1969} and the elementary proof in \cite{Bridges1981}. 

\begin{theorem}[Great Picard Theorem]
  \label{thm: Big Picard}
  Let $f: \D^* \to \Cp$ be a holomorphic function. Then $0$ is either a removable singularity or a pole of $f$. Consequently, suppose  that $0$ is an essential singularity of $f:\D^* \to \C$, then $f$ omits at most one value in $\C$.
\end{theorem}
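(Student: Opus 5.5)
We will follow Lvovski's argument, with completeness of $d_{\Cp}$ as the key input. Let $f:\D^*\to\Cp$ be holomorphic and consider the circles $C_n=\{|z|=r_n\}$ with $r_n=2^{-n}$. Using $\exp$, we lift $f$ to the strip/half-plane: set $g(w)=f(e^{w})$ on $\{\Re w<0\}$. Each circle $C_n$ is the image of a vertical segment of length $2\pi$ at $\Re w=-n\log 2$. Each such segment lies well inside a disk of radius $n\log 2$ contained in the left half-plane. The Kobayashi diameter of $C_n$ in $\D^*$ is therefore $O(1/n)$; this also follows directly from the explicit formula $F_{\D^*}(z;v)=|v|/(-2|z|\ln|z|)$, since the length of $C_n$ is $2\pi r_n/(2r_n\,n\ln 2)=\pi/(n\ln 2)\to 0$. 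By Result~\ref{thm: distance decreasing}, the curves $f(C_n)$ then have $d_{\Cp}$-diameter tending to $0$.

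Next we split into cases according to the behaviour of $f(z_n)$ for chosen points $z_n=r_n\in C_n$.

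\textbf{Case 1:} some subsequence $f(z_{n_k})$ stays in a compact subset $K$ of $\Cp$. Because $\Cp$ is complete hyperbolic (Theorem~\ref{T:compzalc}), closed Kobayashi balls are compact. Hence for large $k$ the sets $f(C_{n_k})$ lie in a fixed compact $L\subset\Cp$, for instance a closed $1$-neighbourhood of $K$ in $d_{\Cp}$. In particular $f$ is bounded on the circles $C_{n_k}$. By the maximum principle applied on the annuli between consecutive circles $C_{n_k}$, $C_{n_{k+1}}$, $f$ is bounded on a punctured neighbourhood of $0$. Riemann's removable singularity theorem then makes $0$ removable.

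\textbf{Case 2:} no such subsequence exists, so $f(z_n)$ leaves every compact subset of $\Cp$. Passing to a subsequence, $f(z_n)$ tends to $0$, $1$ or $\infty$ on the sphere. The main obstacle is to upgrade this subsequence statement to all of a punctured neighbourhood; this is the step where completeness is used essentially.

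For this step we argue as follows. Consecutive circles $C_n$, $C_{n+1}$ are joined by a radial segment of bounded $d_{\D^*}$-length. Its length is about $\tfrac12\log\frac{n+1}{n}$, which tends to $0$, so consecutive points $f(z_n)$, $f(z_{n+1})$ are at $d_{\Cp}$-distance tending to $0$. Now fix small disjoint neighbourhoods $U_0,U_1,U_\infty$ of the three punctures, with complement $K$ compact in $\Cp$. The $\delta$-separation argument from the proof of Theorem~\ref{T:compzalc}, together with its images under the automorphisms permuting the punctures, gives a positive distance between $\Cp\setminus(U_0\cup U_1\cup U_\infty\cup K')$ and each shrunken neighbourhood. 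Since the steps become small and the sequence eventually avoids every compact set, it cannot jump from one $U_j$ to another. Hence the whole sequence, and by the diameter estimate the whole circles $f(C_n)$, eventually lie in a single $U_j$ and converge to the corresponding puncture.

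The conclusion then follows puncture by puncture. If $j=0$ or $j=1$, $f$ is bounded near $0$ by the maximum principle on annuli, so $0$ is removable. If $j=\infty$, we apply the same reasoning to $1/f$, which is bounded near $0$, so $0$ is removable for $1/f$ and is a pole of $f$.

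For the consequence, suppose $f:\D^*\to\C$ has an essential singularity and omits two values $\alpha\neq\beta$. Then $(f-\alpha)/(\beta-\alpha)$ maps into $\Cp$ and still has an essential singularity at $0$, which contradicts the first part.
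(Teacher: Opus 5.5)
Your proof is correct, but it is organized differently from the paper's. The paper argues by contradiction. If $0$ were essential, Casorati--Weierstrass gives $z_n\to 0$ with $f(z_n)\to a\in\Cp$. The circles $|z|=|z_n|$ have $d_{\D^*}$-length below $\pi/\ln 2$, so their images lie in one Kobayashi ball about $a$, which is relatively compact by completeness. A second use of Casorati--Weierstrass then contradicts the maximum principle between the circles. That is exactly your Case~1, except that the paper only needs bounded, not vanishing, circle lengths. Casorati--Weierstrass forces the paper into that case, so it never has to handle values escaping to $0$, $1$ or $\infty$, and it needs no separation estimate. Your dichotomy on the dyadic circles avoids Casorati--Weierstrass and gives a direct argument that treats removable singularities and poles uniformly, via $f$ or $1/f$; it is closer in spirit to Kwack's extension theorem. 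The price is Case~2, which needs circle lengths and radial steps tending to $0$ plus a separation of the puncture neighbourhoods. In Case~2 the set $\Cp\setminus(U_0\cup U_1\cup U_\infty\cup K')$ is garbled. What you actually use is $d_{\Cp}(U_j',\Cp\setminus U_j)>0$ for pairwise disjoint $U_j$, which you get by rerunning the paper's annulus argument with small radii. You could skip this step entirely using completeness. Since closed balls are compact, $d_{\Cp}(w,K)\to\infty$ as $w$ leaves every compact set. The sets $f(\{2^{-n-1}\le|z|\le 2^{-n}\})$ contain $f(2^{-n})$ and have $d_{\Cp}$-diameter tending to $0$, so they eventually miss the compact set $K=\Cp\setminus(U_0\cup U_1\cup U_\infty)$. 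Connectedness of $f(\{0<|z|\le 2^{-N}\})$ then puts it inside a single $U_j$.
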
 
\begin{proof}
  Suppose $z_0$ is an essential singularity. By the Casorati--Weierstrass theorem, we can find a sequence $z_n \to 0$, with $|z_n|$ strictly decreasing, in $\D^*$ such that $f(z_n) \to a \in \Cp$.
  We may assume that $|z_n| < 1/2$ for all $n$ and that $f(z_n)
  \in B_{\Cp}(a,R)$ (ball in the Kobayashi distance) for some $R > 0$. If $\sigma$ is a circle centered at the origin with radius  $r < 1/2$, then we have
  \[
    \ell_k(\sigma) = \int_0^1 F_{\D^*}(\sigma(t); \sigma'(t)) dt = \int_0^1 \frac{2\pi r}{-2r\ln r} dt = \frac{\pi}{-\ln r}<\frac{\pi}{\ln 2}. 
  \]
  This shows that $f(z)$ lies in a Kobayashi ball centered at $a$ with radius
  $\delta = R +\pi/\ln 2$ whenever $|z| = |z_n|$ for some $n$. Note that, by
  completeness, this ball is precompact in $\Cp$. By the Casorati--Weierstrass theorem again, we can find points in between these circles that are mapped
  outside any large disk centered at the origin. This contradicts the maximum principle, and hence $0$ cannot  be an essential singularity.
\end{proof}

\section{Complete hyperbolicity of $\Cp$ using Landau's theorem}

The goal of this section is to show that $\Cp$ is complete Kobayashi hyperbolic
using Landau's theorem. We begin by stating the Bloch--Landau theorem which was
used by Landau to give the first ``elementary'' proof of Picard's great theorem. As 
Kobayashi hyperbolicity implies Landau's theorem, it follows that the Bloch--Landau theorem 
implies Landau's theorem.

\begin{result}[Bloch--Landau theorem]
  Let $f$ be a holomorphic function on $\D(z_0,r)$ and $f'(z_0) \neq 0$. Then
  the image $f(\D(z_0,r))$ contains the disk $\D(w,c|f'(z_0)|r)$, for some
  absolute universal constant $c > 0$ and some point $w \in \C$.
\end{result}

\begin{remark}
  We again emphasize that this $c$ is independent of the holomorphic function
  $f$. Landau's constant is defined as the supremum of all such constants $c$
  for the case when $z_0 = 0$ and $r = 1$. The exact value of Landau's constant
  is not known. 
\end{remark}

\begin{remark}
  Bloch originally proved this theorem in a stronger form, where he showed that
  the image $f(\D(z_0,r))$ contains a schlicht disk of radius $c|f'(z_0)|r$; that is,
  there exists a subdomain of $\D(z_0,r)$ on which $f$ is injective and whose
  image is a disk of radius $c|f'(z_0)|r$. Here $c$ is a universal constant, and
  the supremum of all such constants is known as Bloch's constants when $z_0 =
  0$ and $r = 1$. The exact value of Bloch's constant is also not known.
\end{remark}

Our strategy for proving the hyperbolicity of $\Cp$ is inspired
by Landau's elementary approach to proving Picard's theorem using the above theorem. Towards the
completion of this manuscript, we found that Lvovski, in his recent book
\cite{Lvovski2020}, has used the exact same strategy to prove the hyperbolicity of
$\Cp$. However, Lvovski deals only with the infinitesimal metric
and does not explicate his results for the Kobayashi distance, so we have elected
to give a detailed proof here as we need these results for our main theorem in
Section~6. The outline of our proof is as follows.

\begin{enumerate}
  \item Given a holomorphic map $f:\D \to \Cp$, we construct a
  holomorphic map $k:\D \to \C$ that omits a lattice in $\C$. There are several
  constructions of such a map available in the literature. 
  See \cite[Theorem~2.2]{Segal2007nine} for a very detailed and well-motivated
  construction. The proofs of Picard's theorems in the textbooks by Remmert
  \cite{Remmert1998ClassicalTopics} and Conway \cite{Conway1978FOOCV1} also
  contain constructions of such maps.
  We have borrowed, and slightly modified, the construction from \cite[Exercise~10.14]{Marshall2019}.
  The one used by Lvovski in \cite{Lvovski2020} is slightly different. 
  \item Using the structure of this lattice, we show that there exists a
  constant $R > 0$ such that for any $z \in \C$, $k(\D)$ does not contain the
  disk $\D(z,R)$.
  \item By the Bloch--Landau theorem and chain rule, we obtain an upper bound on
  $|f'(0)|$ in terms of an absolute constant and $|f(0)|$.
  \item Finally, we conclude that there exists a neighborhood $U$ of any point
  $z_0$ in $\Cp$, and a positive constant $c$ such that 
  \[
    F_{\Cp}(z;1) > c, \quad \text{for all } z \in U.
  \]
\end{enumerate}

The following Lemma constructs the required holomorphic map $k$ that omits a
lattice in $\C$. 

\begin{lemma}\label{lem: branch of logarithm}
  Let $D$ be a simply connected domain in $\C$, and $f$ be a holomorphic map on
  $D$ that omits the values $0$ and $1$. We can construct a holomorphic map $k:D
  \to \C$ such that the image of $k$ does not contain any disk of radius greater than $2$.  
\end{lemma}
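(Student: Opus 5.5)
The plan is the classical double-logarithm construction, as in \cite[Exercise~10.14]{Marshall2019}. Since $D$ is simply connected and $f$ omits $0$, there is a holomorphic branch $g$ of $\frac{1}{2\pi i}\log f$ on $D$, so that $f = e^{2\pi i g}$. Because $f$ omits $1$, the function $g$ omits every integer. Hence $g$ and $g-1$ have no zeros on $D$, and we may take holomorphic square roots $\sqrt{g}$ and $\sqrt{g-1}$. Set $h := \sqrt{g} - \sqrt{g-1}$. Then $h\,(\sqrt{g}+\sqrt{g-1}) = 1$, so $h$ never vanishes. Using simple connectivity once more, define $k := \log h$ as a holomorphic branch on $D$.

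The next step is to identify the values omitted by $k$. We have $h + h^{-1} = 2\sqrt{g}$, so $g = \tfrac14(h+h^{-1})^2$. Consequently, if $k(z) = \pm\log(\sqrt{n}+\sqrt{n-1}) + 2\pi i m$ with $n \ge 1$ an integer and $m \in \Z$, then $h(z) = (\sqrt n + \sqrt{n-1})^{\pm1}$. This gives $h + h^{-1} = 2\sqrt n$ and so $g(z) = n$, which is impossible. Thus $k$ omits the set
\[
L := \{\pm \log(\sqrt{n}+\sqrt{n-1}) + 2\pi i m : n \geq 1,\ m \in \Z\}.
\]
If the constants do not work out, the first adjustment is to replace $\log h$ by a rescaling. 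The case $n=1$ gives the points $2\pi i m$ on the imaginary axis, so the omitted set contains the imaginary-axis lattice points together with their horizontal translates by $\pm a_n$, where $a_n := \log(\sqrt n + \sqrt{n-1})$.

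It then remains to show that every disk of radius greater than $2$ meets $L$, so that $k(D)$ cannot contain such a disk. Vertically, the points of $L$ are spaced by $2\pi$. Horizontally, $a_1 = 0$, the sequence $a_n$ increases to $\infty$, and the gaps satisfy
\[
a_{n+1}-a_n \le a_2 - a_1 = \log(\sqrt2+1) \approx 0.88 .
\]
To verify the gap bound, note that $a_n = \cosh^{-1}(\sqrt n)$, and $\tfrac{d}{dn}\cosh^{-1}(\sqrt n) = \frac{1}{2\sqrt{n(n-1)}}$ is decreasing, so the gaps decrease. Therefore every point of $\C$ lies in a rectangle with vertices in $L$, of width at most $0.89$ and height $2\pi$. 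The farthest such a point can be from a vertex is half the diagonal, namely $\tfrac12\sqrt{0.89^2 + 4\pi^2} \approx 3.17$.

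I expect this last step to be the main obstacle, because $3.17$ exceeds the claimed $2$. The fix I will try first is to rescale the construction: take $g = \frac{1}{2\pi i}\log f$ as above, but the vertical period $2\pi$ is what causes the problem. Multiplying $k$ by a constant $\lambda<1$ shrinks the whole lattice. Choosing $\lambda = 1/2$, for instance, gives vertical spacing $\pi$ and horizontal gaps at most $0.45$, so the half-diagonal is about $1.6 < 2$. Every disk of radius greater than $2$ centered at any $w$ then contains a point of $\lambda L$. Since the lemma only asks for some holomorphic $k$ and never ties it to $f$ beyond this construction, replacing $k$ by $\tfrac12\log h$ suffices, and this yields the statement.
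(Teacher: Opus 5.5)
Your proof is correct and follows the same double-logarithm strategy as the paper. You take a branch of $\frac{1}{2\pi i}\log f$, which omits the integers, then apply a second inverse-cosh-type logarithm so that the result omits a rectangular lattice with bounded gaps. The differences are cosmetic: you use Ahlfors's $\log(\sqrt{g}-\sqrt{g-1})$ where the paper uses $\frac{-i}{\pi}\log(h+\sqrt{h^2-1})$, and your explicit rescaling by $\tfrac12$ does the job of the paper's built-in factor $\frac{-i}{\pi}$ in bringing the hole size below $2$.
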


\begin{proof}
  Since $D$ is simply connected and $f$ omits the value $0$, we can find a
  branch of the logarithm of $f$ on $D$, say $g$. Since $f$ omits $1$, the map
  $g$ omits the values of the form \[2n\pi i,\ n \in \Z.\]
  Therefore, the map $h := \frac{i}{2\pi}g$ omits the integers.
  In the same way, we can define a holomorphic map $k:D \to \C$ such that
  \[
    k(z) = \frac{-i}{\pi} \log \left( h(z) + \sqrt{{h
    (z)}^2 - 1} \right).
  \]
  Elementary algebra shows that the map $h + \sqrt{h^2 - 1}$ must omit values of
  the form $n + \sqrt{n^2 - 1},n \in \Z$. This means the map $k$ omits the
  values of the form 
\[
  m-i\log\left(\Big|n+\sqrt{n^2-1}\Big|\right),\  m ,n \in \Z.
\]  
  Note that the number
  \[\log\left(\Big|n+1+\sqrt{(n+1)^2-1}\Big|\right)-\log\left(\Big|n+\sqrt{n^2-1}\Big|\right)\]
  is less than or equal to
  \[\log\left(2n+2\right)-\log\left(2n-1\right)=\log\left(\frac{2n+2}{2n-1}\right)\leq
  \log(4).\] 
  This means that the image of $k$ does not contain any disk of radius greater than $2$.  
\end{proof}

\begin{theorem}
  The domain $\Cp$ is complete hyperbolic. 
\end{theorem}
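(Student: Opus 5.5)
Following the outline given before Lemma~\ref{lem: branch of logarithm}, I will first prove hyperbolicity by bounding the Kobayashi--Royden metric from below, and then deduce completeness by the same argument as in the proof of Theorem~\ref{T:compzalc}.

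\textbf{Step 1: a derivative bound.} Let $f:\D\to\Cp$ be holomorphic with $f(0)=z_0$. Since $\D$ is simply connected, Lemma~\ref{lem: branch of logarithm} gives a holomorphic $k=\phi\circ f$ on $\D$ whose image contains no disk of radius greater than $2$. Here $\phi$ is the composite of the chosen branches
\[
g=\log f,\quad h=\tfrac{i}{2\pi}g,\quad k=\tfrac{-i}{\pi}\log\bigl(h+\sqrt{h^2-1}\bigr).
\]
If $k'(0)\neq 0$, the Bloch--Landau theorem with $r=1$ shows that $k(\D)$ contains a disk of radius $c|k'(0)|$. Hence $|k'(0)|\le 2/c$, and this holds trivially when $k'(0)=0$.

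By the chain rule,
\[
k'(0)=\frac{-i}{\pi}\cdot\frac{h'(0)}{\sqrt{h(0)^2-1}},\qquad h'(0)=\frac{i}{2\pi}\cdot\frac{f'(0)}{z_0}.
\]
This gives
\[
|f'(0)|\le C\,|z_0|\,\bigl|\sqrt{h(0)^2-1}\bigr|,
\]
where $C$ is an absolute constant.

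\textbf{Step 2: controlling the branch.} The main obstacle is that $h(0)=\tfrac{i}{2\pi}\log z_0$ depends on the branch of the logarithm, which is determined by $f$ and not by $z_0$. The right-hand side must be bounded uniformly for $z_0$ near a fixed point. To do this, I will use the freedom to add $2\pi i m$ to $g$: this shifts $h$ by an integer $m$, and $k$ still omits a lattice of the same shape. So I may normalize $\log z_0$ to lie in the strip $|\Im\log z_0|\le\pi$. For $z_0$ in a small neighborhood $U$ of a point $w_0\in\Cp$, $h(0)$ then ranges over a bounded set, giving
\[
|f'(0)|\le C_U \quad\text{for every } f \text{ with } f(0)\in U.
\]
One subtlety needs checking: the bound must not degenerate where $h(0)^2=1$. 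That happens only when $z_0=1$, since $h(0)=\pm1$ forces $\log z_0=\mp 2\pi i$. Near such points the factor $\sqrt{h(0)^2-1}$ is small, which only helps the upper bound, so no degeneration occurs. It follows that $F_{\Cp}(z;1)\ge 1/C_U$ on $U$, and Result~\ref{L:hypinfinitesimal} yields that $\Cp$ is hyperbolic.

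\textbf{Step 3: completeness.} This step repeats the second half of the proof of Theorem~\ref{T:compzalc} verbatim. That argument used only hyperbolicity (through continuity of the Kobayashi--Royden metric), the automorphisms of $\Cp$ permuting $0,1,\infty$, and completeness of $\D^*$. Given a Kobayashi--Cauchy sequence, I pass to a subsequence converging on the sphere. If the limit is a puncture, I move it to $0$ by an automorphism. I then find $\delta>0$ separating $\D^*/2$ from $\Cp\setminus\D^*$, and compare $d_{\Cp}\ge c\,d_{\D^*}$ on short chains. The sequence is therefore Cauchy in the complete space $\D^*$, so it converges in $\Cp$.
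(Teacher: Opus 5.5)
Your proof is correct. The hyperbolicity part is the paper's argument: the map $k$ of Lemma~\ref{lem: branch of logarithm}, the Bloch--Landau bound on $|k'(0)|$, and the chain rule giving $|f'(0)|\le C|z_0|\,|\sqrt{h(0)^2-1}|$. Your Step~2 is more careful than the paper, which only asserts that $C$ is independent of the branches. Observing that $g(0)$ may be taken to be \emph{any} logarithm of $z_0$, and hence normalized so that $|\Im g(0)|\le\pi$, is exactly what makes the estimate uniform on a neighborhood of each point.

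The genuine difference is in completeness. In this section the paper makes the estimate explicit near the puncture, $F_{\Cp}(z;1)\ge 1/(C'|z||\log z|)$. It integrates this along curves to get $d_{\Cp}(z,a)\ge \frac{1}{C'}\bigl|\log\bigl|\log z/\log a\bigr|\bigr|\to\infty$ as $z\to 0$, and handles $1$ and $\infty$ by automorphisms. That route stays inside the Landau framework and gives a quantitative ($\log\log$) rate at which the punctures recede. The price is that the branch-dependent bound must be controlled asymptotically: it is only clean for $|z|$ small, with $\log$ continued along the part of the curve near $0$. You instead reuse the localization argument from the proof of Theorem~\ref{T:compzalc}, which the paper itself says would suffice. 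It needs only the qualitative local lower bound plus completeness of $\D^*$, so it is more robust, but it gives no rate.

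Two small points if you run that argument verbatim. First, you do not need continuity of the Kobayashi--Royden metric to get a positive lower bound for $F_{\Cp}(z;v)/|v|$ on the annulus $A$. Your Step~2 bound is uniform on compact subsets of $\Cp$, so compactness of $A$ suffices. Second, the inequality $d_D(0,a_i)\ge d_{\D^*}(f_i(0),f_i(a_i))$ requires $f_i(D)\subset\D^*$ for the \emph{larger} disk $D=\D(0,\tanh\delta)$. The threshold $\delta'=\delta/2$ only guarantees $f_i(\D(0,\tanh(\delta/2)))\subset\D^*$. Take the threshold $\delta/3$ instead, with $D'=\D(0,\tanh(\delta/3))$ and $D=\D(0,\tanh(2\delta/3))$. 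Then for $z\in D$ you get $d_{\Cp}(p,f_i(z))<\delta/3+2\delta/3=\delta$, so $f_i(D)\subset\D^*$ while still $a_i\in D'\Subset D$.
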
 

\begin{proof} 
    Let $z_0 \in \Cp$. By Result~\ref{L:hypinfinitesimal}, it suffices to show that there exists a neighborhood $U$ of $z_0$ and a positive constant $c$ such that
    \[
      F_{\Cp}(z;1) > c, \quad \text{for all } z \in U.
    \]
    Consider a holomorphic map $f:\D \to \Cp$ with $f(0)=z_0$. 
    By the previous lemma, we can construct the associated holomorphic map $k:\D
    \to \C$ such that the image of $k$ does not contain any disk of radius
    greater than $2$. The Bloch--Landau theorem now shows that there is an
    absolute bound on $|k'(0)|$ independent of the choice of $f$. In fact this
    bound is even independent of the choice of $z_0$. Notice that the derivative
    of $h$ at $0$ is just some constant multiple of the derivative of $f$ at $0$
    with constant depending on $z_0$. More precisely, we have
    \[
      h'(0) = \frac{i}{2\pi} \cdot \frac{f'(0)}{f(0)} = \frac{i}{2\pi} \cdot
      \frac{f'(0)}{z_0},
    \]
    which means $f'(0) = C_1 z_0 h'(0)$ for some constant $C_1$. Similarly, we have
    \[
        k'(0) = \frac{-i}{\pi} \, \frac{h'(0)}{\sqrt{\,h(0)^2 - 1\,}}.
    \]
    And the derivative of $k$ at $0$ is again
    some constant multiple of the derivative of $h$ at $0$ with constant
    depending on $z_0$. This shows that there exists a positive constant $C$
    such that
    \[
    |f'(0)| \leq C|z_0| \left|\sqrt{(\log(z_0))^2 - 1}\right|.
    \]   
    Note that this constant $C$ is in fact independent of the choice of $z_0$,
    of $f$ and of the various branches used in the construction. It is now clear that
    \[
      F_{\Cp}(z;1) \geq c,
    \]
    for all $z$ in some neighborhood $U$ of $z_0$ and for some positive constant $c$. This shows that the Kobayashi metric is bounded from below in a neighborhood
    of $z_0$ and hence that $\Cp$ is hyperbolic. For the sake of completeness (no pun intended), let us make the bound above more useable. We have for
    $|z| < 1, z \neq 0$,
    \begin{align*}
      |f'(0)| &\leq C|z|\left|\sqrt{(\log z)^2 - 1}\right|\leq C|z|\left|\sqrt{(\log z + 1)(\log z - 1)}\right|\\
      &\leq C'|z||\log z|.
    \end{align*}
    Thus, on the punctured disk $\disk^*$, we have
    \[
      F_{\Cp}(z;1) \geq \frac{1}{C'|z||\log z|}.
    \]
    
    It remains to show that the Kobayashi distance on $\Cp$ is complete. This actually follows from the argument in Theorem~\ref{T:compzalc} but we will give another one using the above bound.
    Let $x_n$ be a sequence in $\Cp$ that converges to $0$.
    We will show that $x_n$ is \textbf{not} a bounded sequence with respect to
    the Kobayashi distance. To see this, note that for any fixed point $a \in
    \Cp$, we have
     \[
     K_{\Cp}(z,a) = \inf \left\{ \int_0^1 F_{\Cp}(\gamma(t); \gamma'(t)) dt \right\}, 
  \] 
  where the infimum is taken over all piecewise $C^1$-smooth curves
  $\gamma:[0,1] \to \Cp$ with $\gamma(0)=a$ and $\gamma(1)=z$. We can
  find a branch of the logarithm $\log \gamma(t)$ along the curve $\gamma$. Therefore, using the lower bound for the Kobayashi metric derived above, we have
  \[
  F_{\Cp}(\gamma(t); \gamma'(t)) \geq \frac{|\gamma'(t)|}{C' |\gamma(t)\log \gamma(t)|},\quad \text{for all } t \in [0,1].
  \] 
  Therefore, the Fundamental Theorem of Calculus implies that
  \begin{align*}
    \int_0^1 F_{\Cp}(\gamma(t); \gamma'(t)) dt &\geq \frac{1}{C'} \int_0^1 \frac{|\gamma'(t)|}{|\gamma(t) \log \gamma(t)|} dt \geq \frac{1}{C'} \left| \log \left| \frac{\log z}{\log a} \right| \right| \to \infty, \quad \text{as } z \to 0.
  \end{align*}  
  Thus, the sequence $x_n$ is unbounded with
  respect to the Kobayashi distance. As the automorphism group of $\Cp$ acts
  transitively on the set $\{0,1,\infty\}$, the same argument holds for any sequence converging to any
  boundary point of $\Cp$. This shows that the Kobayashi distance on
  $\Cp$ is complete. 
\end{proof}

\section{A grand equivalence}

The following theorem is inspired by the main result in \cite{Hahn1980}. We
remark that the crucial difference between our result and that of
\cite{Hahn1980} is that we do \textbf{not} use the Uniformization theorem or the
elliptic modular function in our proof. We first need some definitions.

\begin{definition}
  Let $D \subset \C$ be a domain. We say that $D$ has
  \begin{enumerate}
    \item the \textbf{Picard property} (or that $D$ is \textbf{Brody hyperbolic}) if every holomorphic map $f:\C \to D$ is constant.
    \item the \textbf{Schottky property} if for every $0<r<1$ there exists $M(r)>0$ such that any $f \in \hol(\D,D)$ satisfies $|f(z)| \leq M(r)$ for all $|z| \leq r$.
    \item the \textbf{Landau property} if for each $a \in D$, we can find a
    constant $R(a) > 0$ such that for any holomorphic map $f:\D(0,R(a)) \to \C$ with
    $f(0) = a$ and $|f'(0)| = 1$, the image $f(\D(0,R(a)))$ is \textbf{not} fully
   contained in $D$.
   \item the \textbf{Montel property} if the family $\hol(\D,D)$ is normal. 
  \end{enumerate}
\end{definition}

\begin{theorem}
  Let $D \subset \C$ be a domain. The following statements are equivalent:
  \begin{enumerate}
    \item The complement $\C \setminus D$ contains at least two points.
    \item The domain $D$ is Kobayashi hyperbolic.
    \item $D$ is complete Kobayashi hyperbolic.
    \item $D$ has the Landau property.
    \item $D$ has the Schottky property.
    \item $D$ has the Picard property.
    \item $D$ has the Montel property.  
\end{enumerate}
\end{theorem}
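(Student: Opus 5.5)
We would arrange the implications in a cycle with a few side branches. The plan is (1)$\Rightarrow$(3)$\Rightarrow$(2)$\Rightarrow$(4)$\Rightarrow$(6)$\Rightarrow$(1), together with (3)$\Rightarrow$(5)$\Rightarrow$(6) and (3)$\Rightarrow$(7)$\Rightarrow$(6).

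\textbf{Core cycle.} For (1)$\Rightarrow$(3), pick two distinct points $p,q\in\C\setminus D$. The affine map $z\mapsto (z-p)/(q-p)$ is a biholomorphism of $\C$ carrying $D$ onto a domain inside $\Cp$. By Result~\ref{thm: distance decreasing}, $D$ is Kobayashi hyperbolic, since $\Cp$ is hyperbolic by Theorem~\ref{T:compzalc}. The corollary following Theorem~\ref{T:compzalc} then gives completeness. The step (3)$\Rightarrow$(2) holds by definition. For (2)$\Rightarrow$(4), we copy the proof of Theorem~\ref{T:landau}. Result~\ref{L:hypinfinitesimal} gives $F_D(a;1)>0$, so we set $R(a)=2/F_D(a;1)$. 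If $f:\D(0,R(a))\to D$ had $f(0)=a$ and $|f'(0)|=1$, contractivity (Result~\ref{pro: properties of infinitesimal}) would give $F_D(a;1)\le 1/R(a)$, which is a contradiction.

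\textbf{Reaching the Picard property.} For (4)$\Rightarrow$(6), let $f:\C\to D$ be nonconstant and choose $a$ with $f'(a)\ne0$. The map $g(z)=f\bigl(a+z/f'(a)\bigr)$ is entire with $g(0)=f(a)$ and $g'(0)=1$, and its restriction to $\D(0,R(f(a)))$ lies in $D$. This contradicts the Landau property. For (6)$\Rightarrow$(1), note that if $\C\setminus D$ has at most one point, then $D=\C$ or $D=\C\setminus\{z_0\}$. In either case $z\mapsto z$ or $z\mapsto z_0+e^z$ is a nonconstant entire map into $D$.

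\textbf{Schottky and Montel branches.} The Schottky property as literally stated cannot hold for unbounded $D$, because large constant maps already violate it. We would therefore read it with the normalization of the Schottky theorem in Section~4: the bound is required only for $f\in\mathcal{F}(a,R)$, with $M$ depending on $a,R,r$. Making this precise is the main point of care. With that reading, (3)$\Rightarrow$(5) is exactly the Schottky theorem proved in Section~4. For (5)$\Rightarrow$(6), given entire $f:\C\to D$, set $f_n(z)=f(nz)$ on $\D$. Since $f_n(0)=f(0)$, all $f_n$ lie in $\mathcal{F}(f(0),1)$. Hence $|f(w)|=|f_n(w/n)|\le M(1/2)$ whenever $|w|\le n/2$, so $f$ is bounded and therefore constant by Liouville.

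The step (3)$\Rightarrow$(7) is the Montel result for complete hyperbolic targets (equivalently, Result~\ref{R:normalfamily} applied to $(D,d_D)$, whose closed balls are compact). For (7)$\Rightarrow$(6), suppose $f:\C\to D$ is nonconstant and, after translating, $f'(0)\ne0$. Put $f_n(z)=f(nz)$. Since $f_n(0)=f(0)$ for all $n$, the sequence does not diverge compactly. Normality then yields a subsequence converging locally uniformly to a holomorphic map, so the derivatives at $0$ stay bounded. But $f_n'(0)=nf'(0)\to\infty$, a contradiction.
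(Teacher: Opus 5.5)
Your proof is correct. Its forward half matches the paper's:
\begin{itemize}
\item (1)$\Rightarrow$(3) by the affine normalization into $\Cp$ plus the corollary to Theorem~\ref{T:compzalc};
\item the Landau argument with $R(a)=2/F_D(a;1)$, which the paper notes uses only hyperbolicity, so your (2)$\Rightarrow$(4) is the same proof;
\item the Kobayashi-ball proof of Schottky;
\item Montel's theorem for complete hyperbolic targets.
\end{itemize}

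The difference is in the return directions. The paper proves (4)$\Rightarrow$(1) and (7)$\Rightarrow$(1) directly. It works in $D=\C$ or $\C\setminus\{0\}$ with maps built from the exponential, and for (7) it uses the sequences $nz$ and $e^{nz}$. For (5)$\Rightarrow$(6) it simply cites Narasimhan. You instead funnel (4), (5) and (7) into the Picard property by rescaling a nonconstant entire map, via $f(a+z/f'(a))$ or $f(nz)$, and only then use (6)$\Rightarrow$(1), which is the paper's argument.

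The paper's route is shorter for (4) and (7). Yours is self-contained and gains two things:
\begin{itemize}
\item (5)$\Rightarrow$(6) becomes a two-line Liouville argument instead of a citation.
\item Your (7)$\Rightarrow$(6) makes explicit something the paper leaves implicit: the test sequence cannot diverge compactly, because its value at $0$ is fixed.
\end{itemize}
Your cycle also yields (2)$\Rightarrow$(1) as a consequence. By contrast, the corollary behind the paper's (2)$\Rightarrow$(3) assumes from the outset that $\C$ and $\C\setminus\{z_0\}$ are not hyperbolic.

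Your caveat about (5) is a genuine correction to the statement, not a technicality. As literally defined, the Schottky property fails for every unbounded domain, $\Cp$ included, because of constant maps with large values. So (1)$\Rightarrow$(5) is false as written. The paper's own proof of (3)$\Rightarrow$(5) uses exactly the $\mathcal{F}(a,R)$ normalization you adopt. Your (5)$\Rightarrow$(6) only tests maps with $f(0)=a$, so it goes through for any normalization of this type.
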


\begin{proof}
  Two proofs of $(1) \implies (2)$ have already been provided, and the fact that $(2) \implies (3)$ 
  is part of the proof of Theorem~\ref{T:compzalc}.  We have established $(3) \implies (4)$ and  $(3) \implies (5)$ in the proofs of Landau's and Schottky's theorems in Section~4. The implication $(5) \implies (6)$ is standard; see, for example, \cite[Theorem~3, Chapter~4]{Narasimhan2001}. 
   The implications $(4) \implies (1)$ and $(6) \implies (1)$ are obvious by considering the exponential map. The implication $(1) \implies  (7)$ follows from Montel's theorem. 
  It remains to show that (7) $\Rightarrow$ (1): Assume that $D$ does not satisfy (1). Then $\C \setminus D$ contains at most one point. Without loss of generality, we may assume that $D$ is either $\C$ or $\C \setminus \{0\}$. In both cases, it is clear that the family $\hol(\D,D)$ is not normal, as the sequence of holomorphic maps $f_n(z) = nz$ in $\hol(\D,\C)$ and the sequence of holomorphic maps $g_n(z) = e^{nz}$ in $\hol(\D,\C\setminus\{0\})$ do not have any convergent subsequences. This shows that $D$ does not satisfy (7).
\end{proof}

 \bibliographystyle{amsalpha}

\bibliography{Kob_hyperbolic}



\end{document}